\DeclareMathOperator{\id}{id}
\DeclareMathOperator{\stab}{stab}
\DeclareMathOperator{\eval}{eval}
\DeclareMathOperator{\Hom}{Hom}
\DeclareMathOperator{\Ext}{Ext}
\DeclareMathOperator{\res}{res}
\DeclareMathOperator{\cor}{cor}
\newcommand{\iso}{\ensuremath{\cong}}
\newcommand{\Z}[1][]{\ensuremath{\mathbb{Z}_{#1}}}
\newcommand{\R}{\ensuremath{\mathbb{R}}}
\newcommand{\T}{\ensuremath{\mathbb{T}}}
\newcommand{\cat}[1]{\ensuremath{\mathsf{#1}}}
\newcommand{\Shf}{\cat{Shf}}
\newcommand{\PShf}{\ensuremath{R\text{-\sffamily\upshape P`Shf'}}}
\newcommand{\CoShf}{\cat{CoShf}}
\newcommand{\etale}{\cat{\acute{E}tale}}
\newcommand{\coet}{\cat{Co\acute{E}tale}}
\newcommand{\DMod}[1][R]{\ensuremath{\cat{DMod}\text{-}#1}}
\newcommand{\PMod}{\ensuremath{R\text{-}\cat{PMod}}}
\newcommand{\OX}[1][X]{\ensuremath{\cat{O_c}(#1)}}
\newcommand{\op}[1]{\ensuremath{{#1}^{\mathrm{op}}}}
\newcommand{\cA}{\ensuremath{\mathcal{A}}}
\newcommand{\cM}{\ensuremath{\mathcal{M}}}
\newcommand{\superimpose}[2]{{%
  \ooalign{%
    \hfil$\m@th#1\@firstoftwo#2$\hfil\cr
    \hfil$\m@th#1\@secondoftwo#2$\hfil\cr
  }%
}}
\newcommand{\cprod}{\mathop{\mathpalette\superimpose{{\cdot}{\prod}}}}
\newcommand{\nsgp}[1][]{\ensuremath{\triangleleft_{#1}}}
\newcommand{\Zpiof}[1]{{\ensuremath{\Z[{\pi}][\![#1]\!]}}}
\newcommand{\lqt}{\backslash}
\newtheorem{theorem}{Theorem}[section]
\newtheorem{prop}[theorem]{Proposition}
\newtheorem{lem}[theorem]{Lemma}
\newtheorem{clly}[theorem]{Corollary}
\theoremstyle{definition}
\newtheorem{defn}[theorem]{Definition}
\newtheorem*{cnv}{Convention}
\newtheorem{rmk}[theorem]{Remark}
\title{Pontryagin duality and sheaves of profinite modules}
\author{Gareth Wilkes}
\numberwithin{equation}{section}
\begin{document}
\maketitle
\begin{abstract}
The well-known theory of Pontryagin duality provides a strong connection between the homology and cohomology theories of a profinite group in appropriate categories. A construction for taking the `profinite direct sum' of an infinite family of profinite modules indexed over a profinite space has been found to be useful in the study of homology of profinite groups, but hitherto the appropriate dual construction for studying cohomology with coefficients in discrete modules has not been studied. This paper remedies this gap in the theory.
\end{abstract}

\section*{Introduction}

While the study of discrete groups necessitates the use of both homology and cohomology theories for different purposes, for profinite groups one can generally select one favoured theory in which to work: the Pontryagin dual $M^\vee = \Hom(M, \R/\Z)$ obeys the strong relation
\[H^n(G,M^\vee) = H_n(G,M)^\vee\]
for any profinite group $G$ and profinite $G$-module $M$. See \cite[Section 5.1]{RZ}, \cite[Section 6.2]{wilkesbook}. The functor $(-)^\vee$ thus provides a duality between the categories of profinite $G$-modules and discrete torsion $G$-modules which greatly simplifies the study of the (co)homology theory of the profinite group $G$.

For profinite modules, the operation of `profinite direct sum' of a family (`sheaf', see Definition~\ref{defn:sheafpromodules} below) of profinite modules continuously indexed over a profinite space is well-established and is useful in studying the subgroup structure of a profinite group. One prominent example is the Mayer-Vietoris sequence for the homology of a profinite group acting on a profinite tree \cite[Chapter 9]{ribes}. 

In view of the duality between homology and cohomology for a profinite group, one would expect there to exist a dual construction which may be used to study the cohomology of a family of discrete torsion modules indexed continuously over a profinite space. Hitherto this seems to have been oddly absent from the literature. Indeed, some papers (e.g.\ \cite{haran2023relatively}, \cite[Section 10.5]{ribes}, \cite{wilkes2022relative}) which might naturally call upon such a theory have made some awkward dualisations to avoid studying the product of an infinite family of modules.

In this paper we rectify this gap in the theory by explaining fully the duality between the categories of sheaves of profinite and discrete modules over a profinite space $X$, and describing the dual construction to a profinite direct sum. The crucial connection is the use of \'etale spaces: the appropriate dual object to a `sheaf' of profinite modules $\cM$ is an \'etale space $\cA$ of discrete modules, and the space of global sections $\cprod \cA$ of such an \'etale space is the dual construction to the profinite direct sum $\bigboxplus \cM$. The explicit formal statements may be found as Theorems~\ref{thm:maintheorem} and \ref{thm:maintheorem2} below; I will not repeat it here to avoid cluttering the introduction with myriad definitions of categories. 

\begin{cnv}
In this paper, we will use the following notational conventions. 
\begin{itemize}
\item Categories and functors will be written in sans-serif font, e.g.\ \etale. For a category $\cat{C}$, the opposite category is denoted \op{\cat{C}}. 
\item Natural transformations between functors will receive capital Greek letters.
\item Cursive script will denote objects viewed as indexed families of objects, typically modules. Individual modules will receive capital Roman letters.
\item A disjoint union $\bigsqcup_{x\in X} A_x$ is defined as the union $\bigcup_{x\in X} A_x \times \{x\}$. Thus $(a,x)\in \bigsqcup_{x\in X}A_x$ defines an element $a\in A_x$.
\item For spaces $Y$ and $Z$ equipped with maps $f\colon Y \to X$ and $g\colon Z \to X$, the symbol $Y \times_X Z$ denotes the pushout
\[Y\times_X Z = \{(y,z)\in Y\times Z \mid  f(y) = g(z)\}.\]
\item Throughout, $R$ denotes a (perhaps non-commutative) profinite ring with unity. The category of discrete\footnote{Note that since $R$ has a $1$, a discrete $R$-module will be torsion as an abelian group.} right $R$-modules will be called \DMod, and the category of profinite left $R$-modules will be called \PMod. 
\end{itemize}
\end{cnv}

\section{Sheaves and cosheaves over profinite spaces}

Sheaf theory, broadly construed, seeks to study a topological space by considering functors from its category of open sets into some well-understood coefficient category, typically the category of abelian groups \cat{Ab}. For our study of the profinite ring $R$ we will modify this in two ways: we change the coefficient category into a suitable category of $R$-modules, and we allow the space to vary. 

Let $X$ be a profinite space. Rather than the category of all open sets in $X$, it is more appropriate for our needs to work with clopen sets. Let \OX\ be the poset category whose objects are the compact open subsets of $X$. 

\begin{defn}
A {\em presheaf of discrete $R$-modules} is a pair $(\cat{A}, X)$ where $X$ is a profinite set and $\cat{A}$ is a functor \[\cat{A}\colon \op{\OX} \to \DMod.\] A {\em morphism of presheaves} $(\Phi, f) \colon (\cat{A},X) \to (\cat{B}, Y)$ comprises a continuous function $f\colon Y \to X$ and a natural transformation 
\[\Phi\colon \cat{A} \circ \cat{pr}_1 \Rightarrow \cat{B}\circ f^\ast\]
between the functors 
\begin{align*}
\cat{A} \circ \cat{pr}_1\colon & \op{\OX}\times \op{\OX[Y]} \to \DMod, & (U,V) &\mapsto \cat{A}(U), \\
\cat{B} \circ f^\ast \colon & \op{\OX}\times \op{\OX[Y]} \to \DMod, & (U,V) &\mapsto \cat{B}(f^{-1}(U)\cap V).
\end{align*}
We will typically denote the map $\cat{A}(U \subseteq V)$ by $\res_U^V$ or as $a \mapsto a|_U$.

A {\em precosheaf of profinite $R$-modules} is a pair $(\cat{M}, X)$ where $X$ is a profinite set and $\cat{M}$ is a functor $\OX \to \PMod$. A {\em morphism of precosheaves} $(\Psi, f) \colon (\cat{N},Y) \to (\cat{M}, X)$ comprises a continuous function $f\colon Y \to X$ and a natural transformation 
\[\Psi\colon  \cat{N}\circ f^\ast\Rightarrow \cat{M} \circ \cat{pr}_1\]
between the functors 
\begin{align*}
\cat{N} \circ f^\ast \colon & {\OX}\times {\OX[Y]} \to \PMod, & (U,V) &\mapsto \cat{N}(f^{-1}(U)\cap V),\\
\cat{M} \circ \cat{pr}_1\colon & {\OX}\times {\OX[Y]} \to \PMod, & (U,V) &\mapsto \cat{M}(U).
\end{align*}
We will typically denote the map $\cat{A}(U \subseteq V)$ by $\cor_U^V$.
\end{defn}
\begin{rmk}
One may readily show that for a fixed space $X$, the category of presheaves $(\cat{A},X)$ over $X$ with morphisms $(\Phi, \id_X)$ covering the identity map is isomorphic to the `usual' presheaf category $\cat{Fun}(\op{\OX}, \DMod)$ of functors and natural transformations over the fixed space $X$.
\end{rmk}
\begin{defn}\label{defn:sheafcosheaf}
A {\em sheaf} of discrete $R$-modules is a presheaf $(\cat{A}, X)$ such that for any compact open sets $U_1,\ldots, U_n \in \OX$ the sequence
\begin{equation}\label{eqn:sheafcondition}
\begin{tikzcd}
0\ar{r}& \cat{A}(U_1\cup\cdots \cup U_n)  \ar{r} & \displaystyle\prod_{i=1}^n \cat{A}(U_i) \ar{r}{p  -  q } & \displaystyle\prod_{1\leq i,j\leq n}  \cat{A}(U_i \cap U_j)
\end{tikzcd}
\end{equation}
is exact, where $p$ denotes the map $\prod_i \res_{U_i\cap U_j}^{U_i}$ and $q$ is the map $\prod_i \res_{U_i\cap U_j}^{U_j}$. The category of sheaves of discrete modules, with morphisms of (pre)sheaves, will be called $\Shf\text{-}R$. 

A {\em cosheaf} of profinite $R$-modules is a precosheaf $(\cat{M}, X)$ such that for any compact open sets $U_1,\ldots, U_n \in \OX$ the sequence
\[\begin{tikzcd}
 \displaystyle\bigoplus_{1\leq i,j\leq n}  \cat{M}(U_i \cap U_j)  \ar{r}{p  -  q }  & \displaystyle\bigoplus_{i=1}^n \cat{M}(U_i) \ar{r} &\cat{M}(U_1\cup\cdots \cup U_n)  \ar{r} & 0 
\end{tikzcd}\]
is exact, where $p$ denotes the map $\sum_{i,j} \cor_{U_i\cap U_j}^{U_i}$ and $q$ is the map $\sum_{i,j} \cor_{U_i\cap U_j}^{U_j}$ . The category of cosheaves of profinite modules, with morphisms of (pre)cosheaves, will be called $R\text{-}\CoShf$. 
\end{defn}

For the usual sheaf theory, one must consider arbitrary families of open sets; here the compactness means the restriction to finite families is sensible. Indeed, since complements of clopen sets remain clopen, one could replace these (co)sheaf equations with rather simpler formulations.
\begin{prop}\label{prop:simplesheaves}
A presheaf $(\cat{A}, X)$ is a sheaf if and only if, for every disjoint union $V=V_1 \sqcup \cdots \sqcup V_n$ of clopen sets, the natural map 
\[\cat{A}( V) \longrightarrow \prod_{i=1}^n \cat{A}(V_i)\]
is an isomorphism.
Dually, a precosheaf is a cosheaf if and only 
\[\bigoplus_{i=1}^n\cat{M}(V_i) \longrightarrow \cat{M}(V)\]
is an isomorphism for every disjoint union $V=V_1 \sqcup \cdots \sqcup V_n$ of clopen sets. 
\end{prop}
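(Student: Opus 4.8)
The plan is to handle the two implications separately --- the forward one by a short direct computation, the reverse (harder) one by reducing an arbitrary finite family $U_1,\dots,U_n$ to the atoms of the finite Boolean algebra it generates --- and to deduce the cosheaf statement by reversing all arrows (for finite index sets the displayed direct sums are the isomorphic products, so nothing new is needed there). I will also use, as a standing fact, that $\cat{A}(\emptyset)=0$ for a sheaf and $\cat{M}(\emptyset)=0$ for a cosheaf: this is the empty instance $n=0$ of Definition~\ref{defn:sheafcosheaf} (empty union $=\emptyset$, empty product $=0$), and it is equally forced by the displayed condition applied to $V=\emptyset$ written as $\emptyset\sqcup\emptyset$, since the natural map is then the diagonal $\cat{A}(\emptyset)\to\cat{A}(\emptyset)^2$.

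For ``sheaf $\Rightarrow$ displayed isomorphism'': given a disjoint union $V=V_1\sqcup\cdots\sqcup V_n$ of clopen sets, I would feed $U_i=V_i$ into \eqref{eqn:sheafcondition}. Since $\cat{A}(V_i\cap V_j)=\cat{A}(\emptyset)=0$ for $i\neq j$, while on the diagonal factors the restriction $\cat{A}(V_i)\to\cat{A}(V_i)$ is the identity, the two maps $p,q$ coincide and $p-q=0$; exactness then says precisely that the natural map $\cat{A}(V)\to\prod_i\cat{A}(V_i)$ is an isomorphism.

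For the converse, fix compact opens $U_1,\dots,U_n$ with union $U$, and let $W_1,\dots,W_m$ be the nonempty sets among $\bigcap_{i\in T}U_i\cap\bigcap_{i\notin T}(U\setminus U_i)$ over $T\subseteq\{1,\dots,n\}$; these are clopen, pairwise disjoint, partition $U$, and --- for suitable $S_1,\dots,S_n\subseteq\{1,\dots,m\}$ with $\bigcup_i S_i=\{1,\dots,m\}$ --- satisfy $U=\bigsqcup_k W_k$, $U_i=\bigsqcup_{k\in S_i}W_k$, and $U_i\cap U_j=\bigsqcup_{k\in S_i\cap S_j}W_k$. (Only the finiteness of the Boolean algebra generated by finitely many clopen sets is used here, not profiniteness of $X$.) Writing $A_k=\cat{A}(W_k)$ and applying the displayed isomorphism to $\cat{A}(U)$, to each $\cat{A}(U_i)$ and to each $\cat{A}(U_i\cap U_j)$, functoriality of $\cat{A}$ turns every restriction map occurring in \eqref{eqn:sheafcondition} into a coordinate projection between products of the $A_k$, so that \eqref{eqn:sheafcondition} for $U_1,\dots,U_n$ is identified with the formal sequence
\[0\longrightarrow\prod_{k=1}^m A_k\xrightarrow{\ \alpha\ }\prod_{i=1}^n\prod_{k\in S_i}A_k\xrightarrow{\ p-q\ }\prod_{i,j}\prod_{k\in S_i\cap S_j}A_k,\]
in which $\alpha$ is the tuple of projections and $p,q$ are the two ``restrict-and-reindex'' maps. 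It then remains to check this sequence is exact: $\alpha$ is injective since each $k$ lies in some $S_i$; $(p-q)\alpha=0$ because both composites return $a_k$; and if $(b^i_k)\in\ker(p-q)$ then $b^i_k=b^j_k$ whenever $k\in S_i\cap S_j$, so $b^i_k$ depends on $k$ alone and $(b^i_k)=\alpha\bigl((a_k)\bigr)$ with $a_k:=b^i_k$ for any $i$ containing $k$. Hence $\cat{A}$ is a sheaf, and the cosheaf case follows from the arrow-reversed version of the same argument.

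The one step carrying genuine content is this reduction to atoms in the converse direction; everything after it --- the translation via functoriality and the exactness of the formal model --- is bookkeeping, and the only subtlety is the degenerate behaviour at the empty set, absorbed into the standing fact $\cat{A}(\emptyset)=0$.
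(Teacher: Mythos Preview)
Your argument is correct in both directions, and you are right that the only substantive step is the converse.  However, your route to the converse is genuinely different from the paper's.  You refine $U_1,\dots,U_n$ all the way down to the atoms $W_k$ of the finite Boolean algebra they generate, so that every $\cat{A}(U_i)$ and every $\cat{A}(U_i\cap U_j)$ becomes a product of the elementary pieces $A_k=\cat{A}(W_k)$ and the sheaf sequence reduces to a purely combinatorial equalizer diagram in \DMod, whose exactness is a two-line check.  The paper instead uses the coarser ``staircase'' partition $V_i = U_i\smallsetminus(U_1\cup\dots\cup U_{i-1})$ and a short inductive diagram chase: given $x\in\ker(p-q)$ one lifts $r(x)$ to an element $x'$ of $\cat{A}(U)$ via the hypothesis $\cat{A}(U)\cong\prod_i\cat{A}(V_i)$, and then shows $x_i=x'_i$ by induction on $i$ using the splitting $\cat{A}(U_i)\cong\cat{A}(V_i)\times\cat{A}\bigl(U_i\cap(U_1\cup\dots\cup U_{i-1})\bigr)$.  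Your atomic decomposition is more symmetric and makes the combinatorics transparent at the cost of a finer partition; the paper's choice keeps the partition minimal but hides the combinatorics inside an induction.  Either way the essential input is the same: any finite family of clopens in a Boolean algebra can be refined to a disjoint family, and the hypothesis then lets one transport the problem to products of the refined pieces.  Your explicit treatment of the forward implication (via $\cat{A}(\emptyset)=0$ from the $n=0$ case) is also a small improvement in completeness over the paper, which leaves that direction implicit.
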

\begin{proof}
Given the obvious duality it is enough to prove the result for presheaves. Let $(\cat{A}, X)$ be a presheaf satisfying the given condition and let $U_1,\ldots, U_n \in \OX$. Define $V_i = U_i \smallsetminus (U_1\cup \cdots \cup U_{i-1})$ for each $1\leq i\leq n$. Consider the natural commuting diagram
\[\begin{tikzcd}
0\ar{r}& \cat{A}(U)  \ar{r}  \ar[equals]{d}& \displaystyle\prod_{i=1}^n \cat{A}(U_i) \ar{r}{p  -  q }  \ar{d}{r} & \displaystyle\prod_{1\leq i,j\leq n} \cat{A}(U_i \cap U_j) \ar{d}\\
0\ar{r}& \cat{A}(U)  \ar{r}{\iso} & \displaystyle\prod_{i=1}^n \cat{A}(V_i) \ar{r} & 0
\end{tikzcd}\] 
in \DMod. One immediately sees that the upper left map is injective. Suppose $x=(x_1,\ldots, x_n)$ lies in the kernel of $p-q$. By a diagram chase there exists $x'=(x_1',\ldots, x_n')$ in the image of $\cat{A}(U)$ such that $x-x'$ vanishes when mapped to $\prod_i \cat{A}(V_i)$. 

By induction one finds $x_i = x_i'$ for all $1\leq i\leq n$. Initially, $x_1 = x'_1$ is obvious; for $i>1$ the map
\[\begin{tikzcd}
\cat{A}(U_i) \ar{r}{\iso} & \cat{A}(V_i) \times \cat{A}(U_i \cap (U_1\cup\cdots \cup U_{i-1})) \ar{r} & \cat{A}(V_i) \times \displaystyle\prod_{j=1}^{i-1} \cat{A}(U_i \cap U_j),
\end{tikzcd}\]
which is now known to be injective, combined with the equations
\[\res^{U_i}_{U_i\cap U_j}(x_i - x_i') = p(x-x')_{i,j} = q(x-x')_{i,j} = \res^{U_j}_{U_i\cap U_j}(x_j - x_j') =0\]
for all $j<i$ shows that $x_i = x_i'$. 
\end{proof}

Let $\T$ denote the circle group $\R/\Z$. As is well-known, the Pontryagin duality operation $(-)^\vee = \Hom(-, \T)$ induces exact functors
\[(-)^\vee \colon \PMod \to \op{\DMod}, \quad (-)^\vee \colon \DMod \to \op{\PMod},\]
giving a duality of categories. In view of the obvious duality of our definitions of sheaves and cosheaves, the following proposition becomes almost a tautology.
\begin{prop}
For a sheaf $(\cat{A}, X)$ define the cosheaf $(\cat{A}^\vee, X)$ by $\cat{A}^\vee(U) = (\cat{A}(U))^\vee$.  For a cosheaf $(\cat{M}, X)$ define the sheaf $(\cat{M}^\vee, X)$ by $\cat{M}^\vee(U) = (\cat{M}(U))^\vee$. The assignments $(-)^\vee$ are functors inducing an equaivalence of categories
\[\begin{tikzcd}
\op{(\Shf\text{-}R)} \ar[shift left]{r}{\vee} & \ar[shift left]{l}{\vee} R\text{-}\CoShf.
\end{tikzcd}\]
\end{prop}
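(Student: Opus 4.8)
The plan is to verify the three things a statement of this shape needs: that $(-)^\vee$ is well defined on objects, that it is functorial, and that the two composites $(-)^{\vee\vee}$ are naturally isomorphic to the identity functors. Everything rests on the standard properties of $(-)^\vee=\Hom(-,\T)$ already recalled: it is an exact contravariant equivalence between \PMod\ and \op{\DMod}\ carrying finite products to finite coproducts, and it carries a natural double-duality isomorphism $\eta_M\colon M\xrightarrow{\sim}M^{\vee\vee}$. For well-definedness on objects, contravariance of $(-)^\vee$ turns $\cat{A}\colon\op{\OX}\to\DMod$ into a covariant functor $\OX\to\PMod$ with corestrictions $\cor_U^V=(\res_U^V)^\vee$, so $(\cat{A}^\vee,X)$ is at least a precosheaf, and dually $(\cat{M}^\vee,X)$ is a presheaf. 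To see the (co)sheaf conditions are respected I would invoke Proposition~\ref{prop:simplesheaves}: $(\cat{A},X)$ is a sheaf precisely when $\cat{A}(V)\to\prod_{i=1}^n\cat{A}(V_i)$ is an isomorphism for every finite clopen partition $V=V_1\sqcup\cdots\sqcup V_n$; the equivalence $(-)^\vee$ carries this map to the natural map $\bigoplus_{i=1}^n\cat{A}^\vee(V_i)\to\cat{A}^\vee(V)$ (a finite product becoming a finite coproduct) and preserves the property of being an isomorphism, so the dual half of Proposition~\ref{prop:simplesheaves} shows $(\cat{A}^\vee,X)$ is a cosheaf. The claim that $(\cat{M}^\vee,X)$ is a sheaf when $(\cat{M},X)$ is a cosheaf is proved identically, with the roles of products and coproducts exchanged.

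For functoriality, a morphism $(\Phi,f)\colon(\cat{A},X)\to(\cat{B},Y)$ in $\Shf\text{-}R$---equivalently a morphism $(\cat{B},Y)\to(\cat{A},X)$ in \op{(\Shf\text{-}R)}---is a continuous $f\colon Y\to X$ with a natural transformation $\Phi\colon\cat{A}\circ\cat{pr}_1\Rightarrow\cat{B}\circ f^\ast$ of functors $\op{\OX}\times\op{\OX[Y]}\to\DMod$. Dualising each component gives arrows $\Phi^\vee_{(U,V)}\colon\cat{B}^\vee(f^{-1}(U)\cap V)\to\cat{A}^\vee(U)$ in \PMod, and the naturality squares for $\Phi^\vee$ are the images under the functor $(-)^\vee$ of those for $\Phi$; hence $\Phi^\vee\colon\cat{B}^\vee\circ f^\ast\Rightarrow\cat{A}^\vee\circ\cat{pr}_1$ is a natural transformation and $(\Phi^\vee,f)$ is a morphism $(\cat{B}^\vee,Y)\to(\cat{A}^\vee,X)$ in $R\text{-}\CoShf$, in the direction claimed. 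Identities are clearly preserved, and $(\Psi\circ\Phi)^\vee=\Phi^\vee\circ\Psi^\vee$ at the level of modules, hence of transformations---the order-reversal being exactly what passage to the opposite category absorbs---so $(-)^\vee$ is a functor $\op{(\Shf\text{-}R)}\to R\text{-}\CoShf$, and symmetrically in the other direction.

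Finally, for the composites, for each $U\in\OX$ the isomorphism $\eta_{\cat{A}(U)}\colon\cat{A}(U)\xrightarrow{\sim}\cat{A}^{\vee\vee}(U)$ is natural in the module, so it intertwines each $\res_U^V$ with its double dual; collecting these over all $U$ yields an isomorphism of presheaves $(\cat{A},X)\xrightarrow{\sim}(\cat{A}^{\vee\vee},X)$ covering $\id_X$, and naturality of $\eta$ makes it natural in $(\cat{A},X)$. The same construction gives a natural isomorphism $\id_{R\text{-}\CoShf}\cong(-)^{\vee\vee}$, so the two functors $(-)^\vee$ are mutually quasi-inverse and the displayed equivalence follows. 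I expect nothing here to be deep once Pontryagin duality is in hand; the only place calling for a little care is tracking the directions of the natural transformations attached to morphisms---especially those covering a nontrivial $f$---after passing to the opposite category, and checking that the double-dual isomorphism of presheaves is compatible with this relative morphism structure, which reduces directly to the naturality square for $\eta$.
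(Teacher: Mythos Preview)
Your proposal is correct. The paper gives no proof of this proposition, remarking only that it is ``almost a tautology'' in view of the manifest duality of the defining exact sequences; your verification---using Proposition~\ref{prop:simplesheaves} for the (co)sheaf conditions and naturality of the double-dual isomorphism for the equivalence---simply spells out what the paper leaves implicit.
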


To make these rather abstract functors appear more concrete, and relate them to the existing material on families of $R$-modules, we will provide equivalences of these categories with certain categories of topological spaces. We begin with the well-known reformulation of sheaves as \'etale spaces, though we will need to take note that our constructions respect the topology of $R$.

\section{\'Etale spaces of $R$-modules}

\begin{defn}
An {\em \'etale space} over $X$ is a topological space $\cA$ equipped with a local homeomorphism $p\colon \cA \to X$: that is, each point $a\in \cA$ has an open neighbourhood $W$ such that $p|_W\colon W \to p(W)$ is a homeomorphism to an open set $p(W)$ of $X$. A {\em local section} $s\colon U \to \cA$ is a continuous map with $ps = \id_U$, where $U\subseteq X$ is open. A {\em global section} is a local section defined on all of $X$. Write $A_x = p^{-1}(x)$ for the {\em fibre} of $\cA$ over $x\in X$. We refer to the triple $(\cA, p, X)$ as an {\em \'etale bundle}, though we will often abbreviate this simply to ``the \'etale space $\cA$''.

An {\em \'etale space of (right) $R$-modules} comprises:
\begin{itemize}
\item an \'etale bundle $(\cA, p, X)$ with $X$ a profinite set;
\item a global section $o\colon X\to \cA$;
\item a continuous map $\alpha \colon \cA \times_X \cA \to \cA$ commuting with $p$; and
\item a continuous map $\rho \colon \cA \times R\to \cA$ commuting with $p$;
\end{itemize}
such that for each $x\in X$ the operations $\alpha \colon A_x \times A_x \to A_x$ and $\rho \colon A_x \times R \to A_x$ give $A_x$ the structure of a discrete $R$-module with zero element $o(x)$.
\end{defn}
\begin{defn}
A {\em morphism of \'etale spaces of $R$-modules} $(\phi, f) \colon (\cA,p, X) \to (\mathcal{B}, q, Y)$ comprises a continuous function $f\colon Y\to X$ and a continuous function $\phi \colon \cA \times_X Y \to \mathcal{B}$ which restricts to a morphism of $R$-modules $A_{f(y)} \times \{y\} \to B_y$ for each $y\in Y$.

The category of \'etale spaces of $R$-modules and morphisms between them will be denoted $\etale\text{-}R$.
\end{defn}
We record a highly useful lemma about local sections of \'etale spaces. 
\begin{lem}[{\cite[Proposition II.6.1]{maclanemoerdijk}}]\label{lem:etalespaces}
Let $(\cA, p, X)$ be an \'etale bundle. The map $p$ and all local sections $s\colon U \to \cA$ are open maps. Through every point $a\in \cA$ there is at least one local section, and the images of such sections form a base for the topology of $\cA$.

For any local sections $s$ and $t$ of $\cA$, the set $\{x\in X\mid s(x) = t(x)\}$ where both sections are defined and agree is open in $X$. 
\end{lem}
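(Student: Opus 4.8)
The plan is to argue directly from the definition of a local homeomorphism. For each point $a\in\cA$ I fix a distinguished open neighbourhood $V_a\ni a$ on which $p$ restricts to a homeomorphism $p|_{V_a}\colon V_a\to p(V_a)$ onto an open subset of $X$; every assertion of the lemma should then drop out by gluing together the local inverse sections $(p|_{V_a})^{-1}$.

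I would first check that $p$ is open: for open $W\subseteq\cA$, write $W=\bigcup_{a\in W}(W\cap V_a)$; each $p(W\cap V_a)$ is open in $p(V_a)$, hence in $X$, since $p|_{V_a}$ is a homeomorphism, so $p(W)$ is a union of open sets. The map $s_a:=(p|_{V_a})^{-1}\colon p(V_a)\to\cA$ is then a continuous section of $p$ with $s_a(p(a))=a$ and image exactly $V_a$, which settles the existence of a local section through every point.

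For openness of an arbitrary local section $s\colon U\to\cA$, I would fix open $U'\subseteq U$ and a point $x\in U'$, set $a=s(x)$, and note that $s^{-1}(V_a)$ is an open neighbourhood of $x$ contained in $p(V_a)$ — because $y=p(s(y))\in p(V_a)$ whenever $s(y)\in V_a$ — on which $s$ coincides with $(p|_{V_a})^{-1}$ by uniqueness of preimages inside $V_a$. Then $s\big(U'\cap s^{-1}(V_a)\big)=(p|_{V_a})^{-1}\big(U'\cap s^{-1}(V_a)\big)$ is open in $V_a\subseteq\cA$ and is a neighbourhood of $s(x)$ contained in $s(U')$, so letting $x$ vary gives that $s(U')$ is open. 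The local identification $s=(p|_{V_a})^{-1}$ is the one step that is not purely formal, and is where I would take the most care. The basis statement then follows by a restriction trick: for open $W\subseteq\cA$ and $a\in W$, the section $s_a$ restricted to the open set $p(W\cap V_a)$ has image $W\cap V_a$, so $W$ is a union of images of local sections, each open by the preceding point.

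Finally, for two local sections $s,t$ on a common domain $U$ (in general I would intersect the two domains first) and a point $x_0$ with $s(x_0)=t(x_0)=:a$, the set $s^{-1}(V_a)\cap t^{-1}(V_a)$ is an open neighbourhood of $x_0$ on which $s$ and $t$ both take values in $V_a$ and satisfy $p|_{V_a}(s(y))=y=p|_{V_a}(t(y))$; injectivity of $p|_{V_a}$ forces $s=t$ there, so the agreement set is open. The only real book-keeping throughout is tracking domains of definition and the passage through the homeomorphisms $p|_{V_a}$; beyond that I anticipate no serious obstacle, this being the classical fact about \'etale spaces.
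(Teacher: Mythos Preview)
Your argument is correct and is essentially the standard proof of this classical fact. The paper does not actually supply a proof of this lemma: it is stated with a citation to \cite[Proposition II.6.1]{maclanemoerdijk} and used as a black box, so there is no in-paper proof to compare against. Your write-up matches the textbook argument one finds in that reference.
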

\begin{defn}
Let $\cA$ be an \'etale space of $R$-modules. We denote the {\em space of global sections} $s\colon X\to \cA$ by the symbol
\[\cprod \cA\quad \text{or} \quad \cprod_{x\in X} A_x.\]
We may also refer to it as the `continuous product' of the $A_x$. We endow $\cprod \cA$ with the discrete topology.
\end{defn}
\begin{prop}\label{prop:etalespaces2}
Let $\cA$ be an \'etale space of $R$-modules. For any distinct points $x_1,\ldots, x_n$ of $X$ and any $a_i \in A_{x_i}$ there is a global section $s\colon X\to \cA$ such that $s(a_i) = x_i$. Thus the natural map 
\[\cprod_{x\in X} A_x \to \prod_{x\in X}A_x\]
has dense image, where $\prod_{x\in X} A_x$ has the product topology.
\end{prop}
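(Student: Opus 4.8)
The plan is to construct the required global section piecewise over a finite partition of $X$ into clopen sets, using Lemma~\ref{lem:etalespaces} to supply local pieces near the $x_i$ and the zero section $o$ to cover everything else. (I read the first assertion as producing $s$ with $s(x_i) = a_i$.)

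First I would apply Lemma~\ref{lem:etalespaces}: through each $a_i\in\cA$ there is a local section $t_i\colon W_i\to\cA$ defined on an open neighbourhood $W_i$ of $x_i$, and since $p t_i=\id$ one automatically has $t_i(x_i)=a_i$. Because $X$ is profinite it has a basis of clopen sets, so I may shrink $W_i$ to a clopen neighbourhood of $x_i$; and because the $x_i$ are distinct points of a compact Hausdorff totally disconnected space, I may intersect further with pairwise disjoint clopen sets separating the $x_i$. This yields pairwise disjoint clopen sets $U_i\ni x_i$ carrying continuous sections $s_i\colon U_i\to\cA$ with $s_i(x_i)=a_i$.

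Next I would set $U_0 = X\smallsetminus(U_1\cup\cdots\cup U_n)$, which is clopen, so that $X = U_0\sqcup U_1\sqcup\cdots\sqcup U_n$ is a finite clopen partition, and define $s\colon X\to\cA$ by $s|_{U_0}=o|_{U_0}$ and $s|_{U_i}=s_i$ for $i\geq 1$. Continuity of $s$ is routine, being a gluing of continuous maps along a finite clopen cover; since $ps=\id_X$ we obtain $s\in\cprod\cA$, and $s(x_i)=a_i$ by construction. For the density claim, a basic open set of $\prod_{x\in X}A_x$ in the product topology constrains only finitely many coordinates $x_1,\dots,x_n$, and (the $A_x$ being discrete) a nonempty such set contains a point with prescribed values $a_i\in A_{x_i}$ there; the section just built maps to $(s(x))_{x\in X}$, which lies in that set. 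Hence the image meets every nonempty basic open set and is dense.

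I expect no serious obstacle. The one step that genuinely uses the hypotheses is the passage from arbitrary open neighbourhoods of the $x_i$ to pairwise disjoint clopen ones together with a clopen complement — precisely where $X$ being a profinite (Stone) space enters; over a general base the $x_i$ need not be separable by clopen sets, nor the complement coverable by a single clopen piece, and the statement would fail.
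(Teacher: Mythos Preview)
Your proof is correct and follows essentially the same approach as the paper: separate the $x_i$ by disjoint clopen sets, pass to clopen subsets on which local sections through the $a_i$ exist (via Lemma~\ref{lem:etalespaces}), and extend by the zero section $o$ on the clopen complement. Your reading $s(x_i)=a_i$ is the intended one, and your explicit verification of the density claim goes slightly beyond what the paper spells out.
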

In particular, for profinite spaces the existence of a local section through each point in Lemma~\ref{lem:etalespaces} may be improved to `global section'.
\begin{proof}
Since $X$ is a profinite space we may find disjoint clopen neighbourhoods $U_i$ of the $x_i$. Perhaps passing to a clopen subset of each $U_i$, there is a local section $s|_{U_i}\colon U_i \to \cA$ through each $a_i$. By defining $s$ to be the restriction of the zero section $o$ to the clopen set $X \smallsetminus (U_1 \cup\cdots \cup U_n)$ we find the required global section. 
\end{proof}
\begin{prop}
Let $\cA$ be an \'etale space of $R$-modules. When $\cprod \cA$ is endowed with the discrete topology, the operations
\begin{align*}
+ \colon \cprod \cA \times \cprod \cA &\to \cprod \cA, & (s,t) & \mapsto (x\mapsto \alpha(s(x), t(x))), \\
\cdot \colon \cprod \cA \times R &\to \cprod \cA, & (s,r) & \mapsto (x\mapsto \rho(s(x), r)),
\end{align*}
give $\cprod \cA$ the structure of a discrete right $R$-module with zero element $o\colon X \to \cA$. 
\end{prop}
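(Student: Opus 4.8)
The plan is to check three things: that the two displayed operations actually take values in $\cprod\cA$, that the right $R$-module axioms hold, and that the resulting module is discrete. The first two are formal consequences of the definitions and I would dispose of them quickly; the third is where the hypothesis that $X$ is profinite (in particular compact) genuinely enters, and it is the only step with real content.

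For well-definedness, given global sections $s,t\colon X\to\cA$ the assignment $x\mapsto(s(x),t(x))$ is continuous as a map into $\cA\times\cA$ and, since $ps=pt=\id_X$, factors through $\cA\times_X\cA$; composing with the continuous map $\alpha$ yields a continuous map $X\to\cA$, and since $\alpha$ commutes with $p$ this map satisfies $p\bigl(\alpha(s(x),t(x))\bigr)=ps(x)=x$, so it is again a global section. Identically, for fixed $r\in R$ the map $x\mapsto\rho(s(x),r)$ is continuous (compose $x\mapsto(s(x),r)$ with $\rho$) and is a section because $\rho$ commutes with $p$; thus $+$ and $\cdot$ are defined on $\cprod\cA$, the zero section $o$ lies in $\cprod\cA$ by hypothesis, and additive inverses exist since $x\mapsto\rho(s(x),-1)$ is a global section (using that $R$ has a $1$). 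For the axioms, two global sections agree iff they agree at every point of $X$, and both operations are computed pointwise; since each fibre $A_x$ is a discrete right $R$-module with zero $o(x)$ under $\alpha$ and $\rho$, every defining identity of a right $R$-module holds in $\cprod\cA$ by evaluating at each $x\in X$ and invoking the same identity in $A_x$. In particular $o$ is the zero element, so $\cprod\cA$ is an abstract right $R$-module.

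It remains to prove discreteness, and here is where I would use compactness. Since $\cprod\cA$ carries the discrete topology, I need the action map to be continuous, equivalently that for each $s\in\cprod\cA$ the additive map $R\to\cprod\cA$, $r\mapsto s\cdot r$, be locally constant. Consider the continuous map $\Phi\colon X\times R\to\cA$, $(x,r)\mapsto\rho(s(x),r)$. By Lemma~\ref{lem:etalespaces} the image $o(X)$ of the global section $o$ is open in $\cA$, so $\Phi^{-1}(o(X))$ is an open subset of $X\times R$ containing $X\times\{0\}$ (because $s(x)\cdot 0=o(x)$). As $X$ is compact, the tube lemma furnishes an open neighbourhood $V$ of $0$ in $R$ with $X\times V\subseteq\Phi^{-1}(o(X))$; then for all $x\in X$ and $r\in V$ we obtain $\rho(s(x),r)\in o(X)\cap A_x=\{o(x)\}$, i.e.\ $s\cdot r=o$. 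By additivity, $r\mapsto s\cdot r$ is then constant on every coset $r_0+V$, hence locally constant and thus continuous, so $\cprod\cA$ is a discrete $R$-module. I expect the main obstacle to be exactly this last paragraph — promoting the fibrewise fact that each $s(x)$ is killed by some open ideal of $R$ to a single open set annihilating the whole section $s$ — which compactness of $X$ makes possible through the tube lemma together with the openness of $o(X)$.
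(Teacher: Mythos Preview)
Your proof is correct and follows essentially the same route as the paper: dismiss the well-definedness and module axioms as formal, then for discreteness pull back the open set $o(X)\subseteq\cA$ along the action to get an open subset of $X\times R$ containing $X\times\{0\}$ and use compactness of $X$ (the tube lemma, which the paper calls ``a standard compactness argument'') to find an open neighbourhood of $0$ in $R$ annihilating $s$. The only cosmetic difference is that the paper immediately upgrades this neighbourhood to an open ideal of the profinite ring $R$, whereas you pass from an open neighbourhood to local constancy via additivity; these are equivalent.
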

\begin{proof}
The well-definedness of this abstract $R$-module structure is immediate. It remains to show that the action of $R$ is continuous. It is sufficient to prove that for any $s\in \cprod \cA$ the ideal 
\[ I = \{r\in R \mid sr = 0\}\]
is open in $R$. 

Now, the image of the zero section $o$ is open in $\cA$, so $\rho^{-1}(o(X))$ is open in $\cA \times R$, and $V=(s \times \id)^{-1}(\rho^{-1}(o(X)))$  is an open set in the profinite space $X\times R$ containing $X \times \{0\}$. By a standard compactness argument there is an open ideal $J \nsgp R$ such that $X \times J \subseteq V$, so the ideal $I$, which contains $J$, is open as required. 
\end{proof}
Now that we know all our objects remain continuous $R$-modules, we are in a position to prove the usual equivalence of \'etale spaces and sheaves. The additions we must make to the standard material are to account for the topologies of the various spaces. 
\begin{theorem}
Let $R$ be a profinite ring with unity. There are functors 
\begin{align*}
\etale\colon &\Shf\text{-}R \to \etale\text{-}R,\\
\Shf\colon &\etale\text{-}R \to \Shf\text{-}R
\end{align*} 
defining an equivalence of categories
\[\begin{tikzcd}
\etale\text{-}R \ar[shift left]{r} &  \ar[shift left]{l} \Shf\text{-}R.
\end{tikzcd}\]
\end{theorem}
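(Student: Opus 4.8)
The strategy is the classical étale-space/sheaf equivalence, adapted to track the profinite topology on the base $X$ and the module structure on fibres. I will construct the two functors explicitly and then exhibit the unit and counit natural isomorphisms.

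\emph{Construction of $\Shf$.} Given an étale space of $R$-modules $(\cA, p, X)$, define $\Shf(\cA)$ on $U \in \OX$ to be the set of continuous sections $s\colon U \to \cA$, with pointwise $R$-module operations via $\alpha$ and $\rho$; the restriction maps $\res_U^V$ are literal restriction of sections. Each $\Shf(\cA)(U)$ is a discrete $R$-module by the same compactness argument used above for $\cprod\cA$ (the relevant ideal annihilating a section is open). The sheaf condition in the form of Proposition~\ref{prop:simplesheaves} is immediate: a section on a disjoint clopen union $V = V_1 \sqcup \cdots \sqcup V_n$ is exactly a tuple of sections on the $V_i$, since continuity is a local condition and the $V_i$ are clopen. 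On a morphism $(\phi, f)\colon \cA \to \cB$ one sends a section $s$ of $\cB$ over $U \subseteq Y$ to the section $y \mapsto \phi(s_{\text{pulled back}})$; checking this is a morphism of presheaves in the sense of the definition (naturality over $\op{\OX}\times\op{\OX[Y]}$) is routine.

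\emph{Construction of $\etale$.} Given a sheaf $(\cA, X)$, form the fibre $A_x = \varinjlim_{x \in U} \cA(U)$ over each $x$, set $\cA^{\mathrm{et}} = \bigsqcup_{x\in X} A_x$, and topologise it by declaring the images $\{(a|_x, x) : x \in U\}$ of each $a \in \cA(U)$, for $U$ ranging over $\OX$, to be a base. One checks this is a base (two such basic sets intersect in another, using that the locus where two sections of a sheaf agree is clopen here, again by Proposition~\ref{prop:simplesheaves} applied to the difference section), that $p$ is a local homeomorphism, and that $o$, $\alpha$, $\rho$ assembled fibrewise from the $R$-module structure on the stalks are continuous — continuity of $\alpha$ and $\rho$ follows because addition and the $R$-action are computed on representatives defined over a common clopen neighbourhood. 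Functoriality on morphisms is the standard induced map on stalks, together with the observation that the pulled-back map $\cA^{\mathrm{et}}\times_X Y \to \cB^{\mathrm{et}}$ is continuous because it carries basic opens to basic opens.

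\emph{The equivalence.} For the counit $\etale\circ\Shf \Rightarrow \id$ on an étale space $\cA$: the stalk of the section sheaf at $x$ is $\varinjlim_{x\in U}\Gamma(U,\cA)$, and Lemma~\ref{lem:etalespaces} (every point lies on a local section, images of sections form a base) gives a natural bijection of this stalk with $A_x$; one verifies the resulting map $\bigsqcup_x A_x \to \cA$ is a homeomorphism and an $R$-module iso on each fibre — surjectivity and openness use that section-images form a base, injectivity uses that two sections agreeing at a point agree on a neighbourhood. For the unit $\id \Rightarrow \Shf\circ\etale$ on a sheaf $\cA$: send $a \in \cA(U)$ to the section $x \mapsto (a|_x, x)$ of $\cA^{\mathrm{et}}$ over $U$. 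Injectivity is the identity theorem for sections (the agreement locus is open, plus the sheaf condition forces a section vanishing stalkwise to vanish); surjectivity — that every continuous section of $\cA^{\mathrm{et}}$ over $U$ comes from a genuine element of $\cA(U)$ — is the one genuinely substantive point: a section is locally of the form $x\mapsto a_i|_x$ for $a_i \in \cA(U_i)$, the $a_i$ and $a_j$ agree on (a clopen neighbourhood within) $U_i\cap U_j$, and here one invokes the sheaf gluing axiom \eqref{eqn:sheafcondition} directly, after refining $\{U_i\}$ to a finite clopen cover by compactness of $U$, to glue to the required global element over $U$.

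\emph{Main obstacle.} None of the algebra is hard; the work is entirely in the topology. The step I expect to demand the most care is verifying that $\etale(\cA)$ genuinely lands in $\etale\text{-}R$ — i.e.\ that $\alpha\colon \cA^{\mathrm{et}}\times_X\cA^{\mathrm{et}} \to \cA^{\mathrm{et}}$ and $\rho\colon \cA^{\mathrm{et}}\times R \to \cA^{\mathrm{et}}$ are continuous — together with the surjectivity half of the unit isomorphism, since that is exactly where the finite-cover sheaf axiom \eqref{eqn:sheafcondition} (rather than its simplified disjoint-union form) must be brought to bear. Everything else is a diagram chase or a direct appeal to Lemma~\ref{lem:etalespaces}.
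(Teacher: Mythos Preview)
Your plan is correct and follows essentially the same route as the paper: stalks-with-germ-topology in one direction, sections-over-clopens in the other, counit via Lemma~\ref{lem:etalespaces}, and the unit via compactness plus gluing. One minor remark: in the surjectivity step the paper actually refines to a clopen \emph{partition} of $U$ and so only needs the disjoint-union form of the sheaf axiom (Proposition~\ref{prop:simplesheaves}), not the full equaliser condition \eqref{eqn:sheafcondition} as you suggest; but invoking \eqref{eqn:sheafcondition} is of course also valid.
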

\begin{proof}
Let $(\cat{A}, X)$ be a sheaf of discrete $R$-modules. We define the {\em \'etale space of $\cat{A}$} by setting
\[A_x = \varinjlim_{U \ni x} \cat{A}(U), \quad \etale(\cat{A}) = \bigsqcup_{x\in X} A_x.\]
Denote the coprojections in the direct limits by $\res_x^U\colon \cat{A}(U) \to A_x$, or simply $\res_x$ when the open set is clear from context. We endow $\etale(\cat{A})$ with the topology defined by the sub-basic sets
\[ O_s = \{ ( \res_x^U(s), x) \mid x\in U, s\in \cat{A}(U)\}.\]
It is immediate that $(\etale(\cat{A}), p, X)$ is an \'etale bundle, where $p$ is the obvious projection map.

Each fibre $A_x$ inherits an $R$-module structure from the $\cat{A}(U)$. Let us prove continuity of the $R$-action map 
\[\rho\colon \etale(\cat{A}) \times R \to \etale(\cat{A});\]
continuity of the addition map is similar. Suppose $\rho((\res^V_x(t),x),r) \in O_s$. Then $\res_x^V(tr) = \res_x^U(s)$, so there is some $W\subseteq U\cap V$ such that $(tr)|_W=s|_W$. Since the action of $R$ on $\cat{A}(W)$ is continuous, there is an open ideal $I$ of $R$ such that $(tr')|_W = s|_W$ for all $r'\in r+I$. We have
\[((\res_x^V(t),x),r) \in O_{t|_W} \times (r+I) \subseteq \rho^{-1}(O_s),\]
verifying continuity of the action.

For a morphism of sheaves $(\Phi, f)\colon (\cat{A}, X) \to (\cat{B}, Y)$ we define the morphism
\[(\phi, f) = \etale(\Phi, f) \colon (\etale(\cat{A}), X) \to (\etale(\cat{B}), Y) \]
by declaring 
\[\phi(a, y) = (\res_y(\Phi_{U,V}(s)), y)\]
when $a = \res_x^U(s)$.

This map is continuous: for a basic open set $O_t$ of $\etale(\cat{B})$, defined by $t \in \cat{B}(V)$, take any $(a,y)\in \phi^{-1}(O_t)$. Take some $U \in \OX$ and $s \in \cat{A}(U)$ with $a = \res_x^U(s)$; then the local sections $\Phi_{U,V}(s)$ and $t|_{f^{-1}(U)\cap V}$ of $\etale(\cat{B})$ agree at $y$, so by Lemma~\ref{lem:etalespaces} there is some $W\in \OX[Y]$ such that $\Phi_{U,V}(s)|_W = t|_W$. Then 
\[(O_s \times W)\cap (\etale(\cat{A}) \times_X Y) \]
is an open neighbourhood of $(a,y)$ in $\phi^{-1}(O_t)$.

Let us define the functor in the other direction. For an \'etale space of $R$-modules $(\cA, p, X)$ we define the sheaf $\Shf(\cA)$ by 
\[\Shf(\cA)(U) = \cprod_{u\in U} A_u = \cprod p^{-1}(U).\]
Given a morphism of \'etale spaces $(\phi,f)\colon (\cA, p, X) \to (\mathcal{B}, q, Y)$ we associate the morphism of sheaves $(\Phi, f) = \Shf(\phi, f)$ defined by 
\[\Phi_{U,V}(s)(y) = \phi(s(f(y)),y)\]
for $y\in f^{-1}(U)\cap V$ and $s\in \cprod_{u\in U} A_u$. 

To establish the equivalence of categories we define natural isomorphisms $\etale \circ \Shf \Rightarrow \mathsf{id}$ and $\mathsf{id} \Rightarrow \Shf\circ\etale$. For an \'etale space $\cA$ the natural isomorphism is defined by 
\begin{align*}
\etale(\Shf(\cA)) & \to  \cA \\
( \res_x(s), x) &\mapsto s(x)
\end{align*}
for $s \in \cat{A}(U)$. It follows from Lemma~\ref{lem:etalespaces} that this is an isomorphism.

In the other direction we define a natural isomorphism of functors 
\[ \cat{J} \colon \cat{A} \Rightarrow \Shf(\etale(\cat{A}))\]
on \OX, which readily gives an isomorphism of sheaves 
\[  \Phi_{U,V} = \res_{U\cap V}^V \circ \cat{J}_U \colon \cat{A}(U) \to \Shf(\etale(\cat{A}))(U\cap V).\]
The natural transformation $\cat{J}$ is defined by 
\begin{align*}
\cat{J}_U \colon \cat{A}(U) &\to \cprod_{u\in U} \etale(\cat{A})_u,\\
a & \mapsto (u \mapsto (\res_u(a), u))
\end{align*}
for $a\in \cat{A}(U)$. It remains to show that $\cat{J}_U$ is an isomorphism for each $U$. 

For injectivity, suppose $\res_u(a) = \res_u(b)$ for all $u\in U$. Then for each $u$ there is some $V_u \in \OX[U]$ such that $a|_{V_u} = b|_{V_u}$. Since $U$ is compact, it is covered by finitely many of the $V_u$, whence the sheaf condition \eqref{eqn:sheafcondition} shows $a=b$.

For surjectivity, take some section $s\colon U \to \etale(\cat{A})$. For each $u\in U$ there is some $a_u \in \cat{A}(U)$ such that $s(u) = \res_x(a_u)$ by Proposition~\ref{prop:etalespaces2}. By Lemma~\ref{lem:etalespaces} there is some $V_u \in \OX[U]$ such that $s|_{V_u} = a_u|_{V_u}$. By compactness of $U$ we may find a partition of $U$ into clopen sets $W_1, \ldots, W_n$ such that each $W_i$ is contained in some $V_{u_i}$. By \eqref{eqn:sheafcondition} there is now some $a\in \cat{A}(U)$ such that $a|_{W_i} = s|_{W_i}$ for all $i$, whence $s = \cat{J}_U(a)$ as required. 
\end{proof}
One should note that the functor $\cprod\colon \etale\text{-}R \to \DMod$ is recovered as the composition
\[\begin{tikzcd}[column sep = large]
\etale\text{-}R \ar{r}{\Shf} & \Shf\text{-}R \ar{r}{\eval_X} & \DMod
\end{tikzcd}\]
where $\eval_X$ is the functor $\cat{A} \mapsto \cat{A}(X)$.

\section{`Sheaves' of profinite $R$-modules}
We must now confront an unfortunate fact of historical notation: the objects which it is our business to exhibit as dual to \'etale spaces of $R$-modules exist in the literature under the name ``sheaves of profinite $R$-modules''. These things are definitely not sheaves in the geometric sense; they are not functors on categories of open sets, but families of profinite $R$-modules which are in some sense continuously indexed by a profinite space $X$. Instead of conjuring some new name for them (`co\'etale space' springs to mind) we will continue to call them `sheaves', with the inverted commas serving as a reminder of the type of object we are considering. We recall the definition. 
\begin{defn}\label{defn:sheafpromodules}
Let $R$ be a profinite ring. A {\em `sheaf' of $R$-modules} consists of a triple $(\cM, p, X)$ with the following properties.
\begin{itemize}
\item $\cM$ and $X$ are profinite spaces and $p\colon\cM\to X$ is a continuous surjection.
\item Each {\em fibre} $M_x=p^{-1}(x)$ is endowed with the structure of a profinite $R$-module such that the maps
\[R\times\cM \to \cM, \quad (r,m)\to r\cdot m \]
\[\cM \times_X \cM \to \cM, \quad (m,n)\to m+n \]
are continuous.
\end{itemize} 
A {\em morphism of `sheaves'} $(\psi,f)\colon (\mathcal{N}, q, Y)  \to (\cM, p, X)$ consists of continuous maps $\psi: {\mathcal{N}}\to\cM$ and $f\colon Y\to X$ such that $p\psi = fq$ and such that the restriction of $\psi$ to each fibre is a morphism of $R$-modules $N_y\to M_{f(y)}$.
\end{defn}
We often contract ``the `sheaf' $(\cM, p, X)$'' to simply ``the `sheaf' $\cM$''. Regarding an $R$-module as a `sheaf' over the one-point space one may talk of a `sheaf' morphism from a `sheaf' to an $R$-module. The category of `sheaves' and morphisms will be called $\PShf$.
\begin{defn}
A {\em profinite direct sum} of a `sheaf' $\cM$ consists of an $R$-module $\bigboxplus_X\cM$ and a `sheaf' morphism $\omega\colon \cM\to \bigboxplus_X \cM$ (sometimes called the `canonical morphism') such that for any profinite $R$-module $P$ and any `sheaf' morphism $\beta\colon\cM\to P$ there is a unique morphism of $R$-modules $\tilde\beta\colon\bigboxplus_X\cM\to P$ such that $\tilde\beta\omega=\beta$.
\end{defn}
The basic properties of `sheaves' of profinite $R$-modules are known. They are essentially given in \cite[Chapter~5 and Section~9.1]{ribes} in the language of  free profinite products of profinite groups, rather than for $R$-modules. An exposition purely for modules appeared in \cite[Appendix~A]{wilkes}.

Our task now is to prove that `sheaves' of profinite $R$-modules are in fact equivalent to cosheaves of profinite modules in the sense of Definition~\ref{defn:sheafcosheaf}. 

For each `sheaf' $(\cM, p, X)$ we may define a cosheaf $\CoShf(\cM)$ by 
\[\CoShf(\cM)(U) = \bigboxplus_{u\in U} M_u.\]
Given a `sheaf' morphism $(\psi,f)\colon (N,q,Y) \to (M,p,X)$ we have a morphism of cosheaves 
\[\Psi_{U,V} \colon \bigboxplus_{y \in f^{-1}(U)\cap V} N_y \to \bigboxplus_{u\in U} M_u  \]
induced by the morphism of `sheaves' $q^{-1}( f^{-1}(U)\cap V) \to p^{-1}(U) \to \bigboxplus_{u\in U}M_u$.

Conversely, given a cosheaf $\cat{M}$, we seek to define a `sheaf' of profinite $R$-modules
\[\cM = \coet(\cat{M}) = \bigsqcup_{x\in X} M_x\]
where $M_x$ is the `stalk'
\[M_x = \varprojlim_{U \ni x} \cat{M}(U).\]
Let $\cor_x^U \colon M_x \to \cat{M}(U)$ be the projection map from the inverse limit. Impose on $\cM$ the topology generated by the sub-basic sets
\[O_{U,W} = \{(m,x) \mid x\in U, \cor_x^U(m) \in W\}\]
where $U\in \OX$ and $W\subseteq \cat{M}(U)$ is open. To show that $\cM$ is a profinite set under this topology, observe that this topology is the same as the topology induced on \cM\ by the inclusion 
\[i\colon \cM \to \prod_{U \in \OX} \cat{M}(U) \times X, \quad i(m,x) = ((\cor_x^U(m)), x),\]
where we set $\cor_x^U = 0$ if $x\notin U$. The latter space being profinite, it suffices to show that $i(\cM)$ is closed. Continuity of the $R$-module operations on $\cM$ is inherited from the space on the right hand side above. 

Take a point $((m_U), x)$ outside $i(\cM)$. Then either there is some $V\notni x$ with $m_V \neq 0$ or there is no $m_x\in M_x$ with $m_U = \cor_x^U(m_x)$ for all $U\ni x$. In the former case, we have
 \[ ((m_U), x) \in \mathrm{pr}_V^{-1}(\cat{M}(V)\smallsetminus\{0\}) \times (X\smallsetminus V) \]
as an open set separating $((m_U),x)$ from $i(\cM)$. In the latter case, there are clopen sets $U\supseteq V \ni x$ such that $\cor_V^U(m_V) \neq m_U$. Take disjoint clopen subsets $W$ and $W'$ of $\cat{M}(U)$ such that $m_V \in W$ and $m_U \in W'$. Then 
\[((m_U),x) \in   ( \mathrm{pr}_V^{-1}((\cor_V^U)^{-1}(W)) \cap \mathrm{pr}_U^{-1}(W') ) \times V\]
is open and contains no point of $i(\cM)$. Thus \cM\ is indeed a profinite space.  

Given a morphism of cosheaves $(\Psi, f)\colon (\cat{N}, Y) \to (\cat{M}, X)$ we may define a `sheaf' morphism $(\psi, f)$ by noting that the natural maps
\[\begin{tikzcd} N_y \ar{r}{\cor_y^{f^{-1}(U)}} & \cat{N}(f^{-1}(U)) \ar{r}{\Psi_{U,Y}} & \cat{M}(U) \end{tikzcd}\]
define a cone on the $\cat{M}(U)$ and therefore give a natural morphism of profinite $R$-modules $N_y \to M_{f(y)}$ for each $y\in Y$. This map is a continuous function
\[\psi \colon \coet(\cat{N}) \to \coet(\cat{M});\]
for if $O_{U,W}$ is a sub-basic open set in $\coet(\cat{M})$ then 
\[\psi^{-1}(O_{U,W}) = O_{f^{-1}(U), \Psi_{U,Y}^{-1}(W)}\]
is open in $\coet(\cat{N})$.

\begin{theorem}
Let $R$ be a profinite ring with unity. There are functors 
\begin{align*}
\coet\colon &R\text{-}\CoShf \to \PShf \\
\CoShf\colon &\PShf \to R\text{-}\CoShf
\end{align*} 
defining an equivalence of categories
\[\begin{tikzcd}
R\text{-}\CoShf \ar[shift left]{r} &  \ar[shift left]{l}\PShf.
\end{tikzcd}\]
\end{theorem}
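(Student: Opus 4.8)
The plan is to imitate, line by line, the proof of the previous (\'etale/sheaf) theorem, dualising every ingredient: `sheaves' and cosheaves of profinite modules replace \'etale spaces and sheaves of discrete modules, and inverse limits and profinite direct sums replace the direct limits and spaces of global sections used there. Most of the genuinely topological content has already been dispatched in the discussion preceding the statement: that $\coet(\cat M)$ is a profinite space carrying continuous $R$-module operations, and that the assignments of both functors on morphisms are well defined and continuous. What remains is to check that $\coet$ and $\CoShf$ respect identities and composites (routine) and to construct mutually inverse natural isomorphisms $\Theta\colon \CoShf\circ\coet \Rightarrow \id_{R\text{-}\CoShf}$ and $\Lambda\colon \id_{\PShf}\Rightarrow \coet\circ\CoShf$; by the remark following the first definition it suffices to work with morphisms covering the identity, i.e.\ with honest natural transformations of functors on $\OX$.

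For $\Theta$, fix a cosheaf $(\cat M, X)$. For each $U\in\OX$ the projections $\cor_x^U\colon M_x\to\cat M(U)$ ($x\in U$) assemble into a `sheaf' morphism from $p^{-1}(U)$ (the restriction of $\coet(\cat M)$ to $U$) to the module $\cat M(U)$ viewed as a `sheaf' over a point; continuity is immediate since the preimage of an open $W\subseteq\cat M(U)$ is precisely the sub-basic set $O_{U,W}$. By the universal property of the profinite direct sum this induces $\Theta_{\cat M,U}\colon\CoShf(\coet(\cat M))(U)=\bigboxplus_{u\in U}M_u\to\cat M(U)$, and naturality in $U$ and in $\cat M$ is a diagram chase. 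The substantive point is that each $\Theta_{\cat M,U}$ is an isomorphism, and this is the exact dual of the proof that $\cat J_U$ is an isomorphism in the previous theorem — it is also the one place the cosheaf axiom is used. One reduces by a compactness argument, via Proposition~\ref{prop:simplesheaves}, to the case of a finite clopen partition of $U$, where the direct-sum gluing condition of Definition~\ref{defn:sheafcosheaf} applies; injectivity and surjectivity are then checked separately, the role of the density statement of Proposition~\ref{prop:etalespaces2} being played here by the fact that $\bigboxplus_{u\in U}M_u$ is generated, as a profinite module, by the union of the images of the fibres $M_u$ (a formal consequence of the universal property).

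For $\Lambda$, fix a `sheaf' $(\cM,p,X)$. The stalk of the cosheaf $\CoShf(\cM)$ at $x$ is $\varprojlim_{U\ni x}\bigboxplus_{u\in U}M_u$, the transition maps being the projections coming from the clopen splittings; the canonical morphisms $M_x\to\bigboxplus_{u\in U}M_u$ for $x\in U$ are compatible as $U$ shrinks, hence define $M_x\to\varprojlim_{U\ni x}\bigboxplus_{u\in U}M_u$. These fit together into a map $\Lambda_{\cM}\colon\cM\to\coet(\CoShf(\cM))$ which is continuous — checked against the sub-basic sets $O_{U,W}$ of the target exactly as in the well-definedness argument for $\psi$ in the excerpt — and is an $R$-module map on each fibre, so a `sheaf' morphism; since a continuous bijection of profinite spaces is a homeomorphism it then suffices to see each $M_x\to\varprojlim_{U\ni x}\bigboxplus_{u\in U}M_u$ is bijective. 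This uses that the profinite direct sum of the single fibre over the one-point space $\{x\}$ is just $M_x$, that profinite direct sums split along the clopen decompositions of $X$ separating $x$ from the rest, and a standard compactness argument to pass to the inverse limit.

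The main obstacle is the isomorphism claim for $\Theta_{\cat M,U}$: as in the previous theorem this is the single step where the defining exactness property of a cosheaf is really needed, and one must juggle simultaneously the universal property of $\bigboxplus$, the profinite topologies, and the inverse limits defining the stalks; the rest is bookkeeping or a transcription of arguments already present in the excerpt. One could instead hope to obtain the theorem formally by Pontryagin-dualising the \'etale/sheaf equivalence along $\op{(\Shf\text{-}R)}\simeq R\text{-}\CoShf$, but that would presuppose the compatibility $(\bigboxplus)^\vee\cong\cprod((-)^\vee)$ of the two constructions, which is not yet available at this point; hence I proceed directly.
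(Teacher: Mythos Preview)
Your skeleton matches the paper's exactly: the same two natural transformations are built (your $\Theta$ is the paper's $\cat J$), via the same universal-property constructions, and the same two isomorphism claims are singled out as the real content. The divergence is in how those isomorphisms are verified. Rather than dualising the element-level injectivity/surjectivity argument from the \'etale side, the paper observes that $\cat M(U)$ is, by the cosheaf condition of Proposition~\ref{prop:simplesheaves}, the inverse limit over clopen partitions $U=\bigsqcup_i V_i$ of the modules $\bigoplus_i\cat M(V_i)$, and then invokes a standing result that profinite direct sums commute with inverse limits of `sheaves' (\cite[Proposition~A.11]{wilkes}); this gives $\bigboxplus_{u\in U} M_u\cong\cat M(U)$ in one stroke. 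The same external proposition, together with the known injectivity of the canonical maps $M_x\hookrightarrow\bigboxplus_{u\in U}M_u$ (\cite[Proposition~A.6]{wilkes}), dispatches the other direction $M_x\cong\varprojlim_{U\ni x}\bigboxplus_{u\in U}M_u$.

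Your plan to dualise the \'etale argument literally is the soft spot. The phrase ``reduce by compactness to a finite clopen partition'' does not actually reduce anything here: splitting $U$ into finitely many clopen pieces just replaces one instance of the claim by several identical ones over the pieces. And the separate injectivity/surjectivity check you sketch does not translate cleanly to the profinite side, where one cannot chase individual elements of $\bigboxplus_{u\in U} M_u$ the way one chases local sections of an \'etale space. What is genuinely needed is precisely the commutation of $\bigboxplus$ with inverse limits that the paper cites; once you recognise that, both isomorphism claims collapse to a line each.
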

\begin{proof}{}
We establish the equivalence of categories by giving natural isomorphisms 
\[\cat{id}\Rightarrow \coet \circ \CoShf, \quad \cat{J}\colon \CoShf\circ \coet \Rightarrow \cat{id}.\]
Let $\cat{M}$ be a cosheaf on $X$ and write $(\cM, p, X) = \coet(\cat{M})$. For each $U\in\OX$ the map
\[p^{-1}(U) \to \cat{M}(U), \quad (m,x) \mapsto \cor_x^U(m)\]
gives, by the universal property of profinite direct sums, a natural map
\[\cat{J}_U \colon \CoShf(\cM)(U) =\bigboxplus p^{-1}(U) \to \cat{M}(U).\]
Observe that we may write $\cat{M}(U)$ as the inverse limit of the modules $\bigoplus_i \cat{M}(V_i)$ indexed over the clopen partitions $U= \bigsqcup_i V_i$ of $U$. All these modules are naturally isomorphic to $\cat{M}(U)$ by the cosheaf condition in Proposition~\ref{prop:simplesheaves}. It now follows from \cite[Proposition~A.11]{wilkes} that $\cat{M}(U)$ is naturally isomprhic to the profinite direct sum of the limits $M_x = \varprojlim_{V\ni x}\cat{M}(V)$---that is, $\cat{J}_U$ is an isomorphism for all $U$. 

In the other direction, take a `sheaf' \cM. The natural maps $M_x \to \bigboxplus_{u\in U} M_u$ being injective for all $U\ni x$ \cite[Proposition~A.6]{wilkes}, an application of \cite[Proposition~A.11]{wilkes} shows that there are natural isomorphisms \[M_x \iso \varprojlim_{U\ni x} \bigboxplus_{u\in U} M_u\] for each $x\in X$. This yields a natural bijection
\[\cM \iso \coet(\cat{M})\]
which is readily seen to  be continuous, the preimage of a basic open set $O_{U,W}$ being the preimage of $W$ under the canonical continuous map $p^{-1}(U) \to \bigboxplus p^{-1}(U)$. 
\end{proof}

\section{Pontryagin duality for sheaves}

The union of the equivalences of categories proved in the previous three sections is the following diagram. 
\begin{theorem}\label{thm:maintheorem}
Let $R$ be a profinite ring with unity. There is a diagram of functors
\[\begin{tikzcd}[column sep = large]
\op{(\etale\text{-}R)} \ar[leftrightarrow]{rd}[swap]{\iso} \ar{rrd}{\cprod} & & \\
&  \op{(\Shf\text{-}R)}  \ar[leftrightarrow]{d}{\iso}[swap]{\vee} \ar{r}[swap]{\eval_X} & \op{(\DMod)} \ar[leftrightarrow]{d}{\iso}[swap]{\vee}\\
& R\text{-}\CoShf  \ar{r}{\eval_X} & \PMod \\
\PShf  \ar[leftrightarrow]{ur}{\iso} \ar{rru}[swap]{\bigboxplus}& &
\end{tikzcd}\]
which commutes up to natural isomorphism.
\end{theorem}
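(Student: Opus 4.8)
The plan is to verify that each of the four constituent functors commutes with its neighbours up to the natural isomorphisms already produced, and then to assemble these into commutativity of the whole diagram. Most of the squares and triangles have in fact already been dealt with in the previous sections: the triangle on the left of the $\etale$--$\Shf$ pair was noted in the remark following the $\etale/\Shf$ equivalence (that $\cprod$ equals $\eval_X \circ \Shf$), the central square relating the two $\eval_X$ functors via $(-)^\vee$ is the defining property of the duality functors on (co)sheaves (Proposition~\ref{prop:simplesheaves} gives $\cat{A}^\vee(U) = \cat{A}(U)^\vee$, so $\eval_X$ intertwines with $(-)^\vee$ on the nose), and the lower triangle asserting $\bigboxplus_X \cM = \eval_X \circ \CoShf(\cM)$ is immediate from the definition $\CoShf(\cM)(U) = \bigboxplus_{u\in U} M_u$ evaluated at $U = X$. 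So the genuinely new content is the outer commutativity: starting from an \'etale space $\cA$ of discrete $R$-modules, pass down-left to the sheaf $\Shf(\cA)$, dualise to the cosheaf $\Shf(\cA)^\vee$, pass down-left again to the `sheaf' $\coet(\Shf(\cA)^\vee)$, and check that $\bigboxplus_X$ of this agrees naturally with $\cprod \cA = \eval_X(\Shf(\cA))$ after one more Pontryagin dual on the right-hand column.

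Concretely, first I would reduce to a statement purely about global sections and stalks. Tracing the left edge, the \'etale space $\cA$ goes to the sheaf $U \mapsto \cprod_{u\in U} A_u$, then to the cosheaf $U \mapsto (\cprod_{u\in U} A_u)^\vee$, then to the `sheaf' whose stalk over $x$ is $\varprojlim_{U\ni x} (\cprod_{u\in U} A_u)^\vee = \bigl(\varinjlim_{U\ni x} \cprod_{u\in U} A_u\bigr)^\vee$, since $(-)^\vee$ converts the inverse limit of profinite modules into the direct limit of discrete ones. The direct limit $\varinjlim_{U\ni x}\cprod_{u\in U} A_u$ should be identified with $A_x$ itself: a germ of a section at $x$ is exactly a point of the stalk, by Lemma~\ref{lem:etalespaces} and Proposition~\ref{prop:etalespaces2}; this is essentially the content of the natural isomorphism $\cat{J}$ constructed in the $\etale/\Shf$ equivalence. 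Hence the `sheaf' obtained along the left edge has stalk $A_x^\vee$ over $x$, and is canonically the `sheaf' $\cA^\vee$ one would expect. It then remains to produce a natural isomorphism
\[
\bigboxplus_{x\in X} A_x^\vee \;\iso\; \Bigl(\cprod_{x\in X} A_x\Bigr)^\vee .
\]

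I would establish this last isomorphism by comparing universal properties against a profinite test module $P$: a `sheaf' morphism $\cA^\vee \to P$ is, fibrewise, a compatible family of maps $A_x^\vee \to P$, which dualises to a compatible family $P^\vee \to A_x$ with suitable continuity, i.e. (after checking the topology matches) an element of the discrete module $\Hom(P^\vee, \cprod_x A_x)$; meanwhile $\Hom_R(\bigboxplus_x A_x^\vee, P) \cong \Hom(P^\vee, (\bigboxplus_x A_x^\vee)^\vee)$ by Pontryagin duality, so the desired isomorphism is equivalent to the assertion $(\bigboxplus_x A_x^\vee)^\vee \iso \cprod_x A_x$, which one can also read off directly by dualising the defining universal property of $\bigboxplus$ and matching it with the description of $\cprod \cA$ as global sections (using Proposition~\ref{prop:etalespaces2} to control the topology: global sections are dense in the product, and the dual of a profinite direct sum is exactly the closed subspace of the product cut out by the cosheaf-gluing conditions). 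Finally I would check naturality in $\cA$ and $X$ by chasing a morphism $(\phi,f)$ of \'etale spaces through both routes, which is routine given that every functor involved was already shown to be a functor.

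The step I expect to be the main obstacle is the careful bookkeeping of topologies in this last isomorphism: $\cprod \cA$ carries the discrete topology while $(\bigboxplus A_x^\vee)^\vee$ is a priori a discrete module built from a profinite object, and one must make sure the abstract $R$-module isomorphism coming from the universal-property juggling is the one induced by the point-set identifications of germs, stalks and sections — in other words, that the two natural isomorphisms constructed section-by-section in the earlier theorems are compatible with Pontryagin duality. This is exactly the kind of compatibility that $\cite[Proposition~A.11]{wilkes}$ and Proposition~\ref{prop:etalespaces2} are designed to handle, so I anticipate it is a matter of assembling those tools rather than proving anything essentially new; nonetheless it is where the proof has to be written with care.
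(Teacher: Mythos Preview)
Your identification of the three constituent cells---the upper triangle $\cprod \cong \eval_X \circ \Shf$, the middle square $(-)^\vee \circ \eval_X \cong \eval_X \circ (-)^\vee$, and the lower triangle $\bigboxplus \cong \eval_X \circ \CoShf$---is exactly right, and you correctly note that each has already been established in the preceding sections. The paper in fact gives no further argument: the theorem is stated as the ``union of the equivalences of categories proved in the previous three sections'', with no proof block at all.

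Where you go astray is in believing there is ``genuinely new content'' in the outer commutativity. There is not. Once every bounded cell of the diagram commutes up to natural isomorphism, any two composite paths with the same endpoints agree up to natural isomorphism by pasting; this is the standard 2-categorical fact about composing natural isomorphisms. So the work you propose in your second and third paragraphs---identifying the stalks of $\coet(\Shf(\cA)^\vee)$ with $A_x^\vee$ and then establishing
\[
\bigboxplus_{x\in X} A_x^\vee \;\cong\; \Bigl(\cprod_{x\in X} A_x\Bigr)^\vee
\]
via a universal-property comparison---is not needed to prove Theorem~\ref{thm:maintheorem}. In the paper these isomorphisms are \emph{consequences} of the theorem, recorded separately as Theorem~\ref{thm:maintheorem2}, rather than ingredients in its proof.

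Your argument for that isomorphism is not wrong in outline, and indeed is close to how the paper derives Theorem~\ref{thm:maintheorem2} from Theorem~\ref{thm:maintheorem}; but for the statement at hand you should stop after the first paragraph. The ``main obstacle'' you anticipate concerning topological bookkeeping is an artefact of having set yourself an unnecessary task.
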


By filling in the `open' side of the diagram above we may define a duality between \'etale spaces and `sheaves' of profinite modules. The expected relationship on fibres holds since Pontryagin duality commutes with direct and inverse limits. 
\begin{defn}
Given an \'etale space of $R$-modules $(\cA, p, X)$ define the {\em dual `sheaf'} 
\[\widecheck{\cA} = \coet(\Shf(\cA)^\vee).\]
For a `sheaf' of profinite $R$-modules $(\cM, p, X)$ define the {\em dual \'etale space}
\[\widecheck{\cM}=\etale(\CoShf(\cM)^\vee).\]
\end{defn}
\begin{theorem}\label{thm:maintheorem2}
The functors $(\widecheck{-})$ define an equivalence of categories 
\[\begin{tikzcd} \op{(\etale\text{-}R)} \ar[shift left]{r} & \ar[shift left]{l} \PShf.\end{tikzcd} \]
There are natural isomorphisms of fibres 
\[ \widecheck{\cA}_x \iso A_x^\vee, \quad \widecheck{\cM}_x \iso M_x^\vee\]
and natural isomorphisms
\[\bigboxplus \widecheck{\cA} \iso (\cprod \cA)^\vee,\quad  \cprod \widecheck{\cM} \iso (\bigboxplus \cM)^\vee\]
of profinite direct sums and continuous products. 
\end{theorem}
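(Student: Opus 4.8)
The plan is to assemble Theorem~\ref{thm:maintheorem2} from the equivalences already in hand rather than reprove anything from scratch. Both functors $\widecheck{(-)}$ are, by definition, composites of the three equivalences $\etale/\Shf$, $(-)^\vee$, and $\coet/\CoShf$, each of which has been shown to be an equivalence of categories; hence the composite $\widecheck{(-)}\colon \op{(\etale\text{-}R)} \to \PShf$ is an equivalence, with quasi-inverse the composite running the other way. Concretely, $\widecheck{\cM} = \etale(\CoShf(\cM)^\vee)$ and $\widecheck{\cA}=\coet(\Shf(\cA)^\vee)$, and the natural isomorphisms $\widecheck{\widecheck{\cA}} \iso \cA$ and $\widecheck{\widecheck{\cM}} \iso \cM$ are obtained by stacking the four natural isomorphisms $\etale\circ\Shf\iso\id$, $(-)^{\vee\vee}\iso\id$ (biduality on profinite and on discrete modules), and $\CoShf\circ\coet\iso\id$ (and their mirror images), being careful that the $\op{(-)}$ on the étale side flips the direction of one composite so the orientations match. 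So the first step is simply to record this and observe that the two $(-)^\vee$ rows of the diagram in Theorem~\ref{thm:maintheorem} already supply the naturality.

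Next I would establish the fibrewise statements. For $\widecheck{\cA}_x$: by construction $\widecheck{\cA} = \coet(\Shf(\cA)^\vee)$, whose stalk over $x$ is $\varprojlim_{U\ni x}\Shf(\cA)^\vee(U) = \varprojlim_{U\ni x}\bigl(\cprod_{u\in U}A_u\bigr)^\vee$. Since Pontryagin duality turns colimits into limits, this equals $\bigl(\varinjlim_{U\ni x}\cprod_{u\in U}A_u\bigr)^\vee$, and $\varinjlim_{U\ni x}\cprod_{u\in U}A_u \iso A_x$ because a compatible family of germs of sections is, for a profinite base, exactly an element of the fibre (this is Lemma~\ref{lem:etalespaces} / Proposition~\ref{prop:etalespaces2} repackaged: the directed system of section-spaces on shrinking clopen neighbourhoods has the fibre as colimit). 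Dually, $\widecheck{\cM}_x$ is the stalk at $x$ of $\etale(\CoShf(\cM)^\vee)$, namely $\varinjlim_{U\ni x}\CoShf(\cM)^\vee(U) = \varinjlim_{U\ni x}\bigl(\bigboxplus_{u\in U}M_u\bigr)^\vee = \bigl(\varprojlim_{U\ni x}\bigboxplus_{u\in U}M_u\bigr)^\vee \iso M_x^\vee$, the last isomorphism being exactly \cite[Proposition~A.11]{wilkes} as already invoked in the proof that $\CoShf\circ\coet\iso\id$. I would phrase both as one argument with the roles of $\varinjlim$ and $\varprojlim$ swapped.

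For the statements about $\bigboxplus$ and $\cprod$, the cleanest route is to read them off the commuting diagram of Theorem~\ref{thm:maintheorem}. The functor $\cprod$ is $\eval_X \circ \Shf$ (noted explicitly after the étale-space theorem), and $\bigboxplus$ is $\eval_X\circ\CoShf$ (by definition of the profinite direct sum and the cosheaf $\CoShf(\cM)$, since $\CoShf(\cM)(X) = \bigboxplus_X\cM$). Therefore $\bigboxplus\widecheck{\cA} = \eval_X\CoShf\coet(\Shf(\cA)^\vee) \iso \eval_X(\Shf(\cA)^\vee) = (\Shf(\cA)(X))^\vee = (\cprod\cA)^\vee$, using $\CoShf\circ\coet\iso\id$ and the fact that $\eval_X$ intertwines the $(-)^\vee$ of sheaves/cosheaves with the $(-)^\vee$ of modules (the right-hand square of the Theorem~\ref{thm:maintheorem} diagram). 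The other isomorphism $\cprod\widecheck{\cM}\iso(\bigboxplus\cM)^\vee$ is the symmetric computation with $\etale\circ\Shf\iso\id$ in place of $\CoShf\circ\coet\iso\id$. Naturality in each case is inherited from naturality of the constituent isomorphisms.

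I expect the only real friction to be bookkeeping rather than mathematics: making sure the $\op{(-)}$ decorations are threaded correctly so that "equivalence" is asserted in the right variance, and checking that the isomorphism $\varinjlim_{U\ni x}\cprod_{u\in U}A_u \iso A_x$ on the étale side is genuinely available — it is implicit in the proof of the étale/sheaf equivalence (the map $(\res_x(s),x)\mapsto s(x)$ and Lemma~\ref{lem:etalespaces}), but I would state it as a small lemma, or at least a sentence, so the fibre computation is not hand-waved. Everything else is a diagram chase through results already proved, so the write-up should be short.
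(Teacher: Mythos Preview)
Your proposal is correct and follows essentially the same approach as the paper. In fact the paper gives no separate proof of Theorem~\ref{thm:maintheorem2}: it is treated as an immediate consequence of the commuting diagram of Theorem~\ref{thm:maintheorem} together with the one-line remark that ``the expected relationship on fibres holds since Pontryagin duality commutes with direct and inverse limits''. Your write-up simply unpacks these two sentences---the equivalence as a composite of the three established equivalences, the fibre identifications via $\varinjlim\leftrightarrow\varprojlim$ under $(-)^\vee$, and the $\bigboxplus/\cprod$ statements read off from $\eval_X$ in the diagram---which is exactly what the paper intends the reader to do.
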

We immediately inherit two important corollaries about direct limits of \'etale spaces from the category of `sheaves' of $R$-modules. 
\begin{clly}[{\cite[Proposition A.12]{wilkes}}]
Every \'etale space of $R$-modules may be written as a direct limit of finite \'etale spaces of $R$-modules. 
\end{clly}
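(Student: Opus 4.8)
The plan is to obtain this as a formal consequence of Theorem~\ref{thm:maintheorem2} together with the known fact \cite[Proposition A.12]{wilkes} that every `sheaf' of profinite $R$-modules is an inverse limit of finite `sheaves', transported across the duality $(\widecheck{-})$. All the real content has already been placed in the equivalence of categories, so the argument is short.

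First I would record the categorical input: an equivalence of categories preserves all limits and colimits that exist, and a contravariant equivalence interchanges the two. Regarding $(\widecheck{-})$ as an equivalence $\PShf \xrightarrow{\sim} \op{(\etale\text{-}R)}$, it therefore carries any inverse limit in $\PShf$ to a direct limit in $\etale\text{-}R$; in particular the direct limits of \'etale spaces used below exist, because the corresponding inverse limits of `sheaves' are constructed explicitly in \cite[Appendix A]{wilkes}.

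Next, fix an \'etale space of $R$-modules $(\cA, p, X)$ and form its dual `sheaf' $\widecheck{\cA} = \coet(\Shf(\cA)^\vee)$, which lives over the same base $X$. By \cite[Proposition A.12]{wilkes} we may write $\widecheck{\cA} \iso \varprojlim_i \cM_i$ as a filtered inverse limit of finite `sheaves' $\cM_i$. Applying $(\widecheck{-})$ and the double-dual isomorphism of Theorem~\ref{thm:maintheorem2} gives
\[\cA \iso \widecheck{\widecheck{\cA}} \iso \widecheck{\varprojlim_i \cM_i} \iso \varinjlim_i \widecheck{\cM_i},\]
exhibiting $\cA$ as a direct limit of the \'etale spaces $\widecheck{\cM_i}$. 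It remains only to note that each $\widecheck{\cM_i}$ is finite: the passage $\cM \mapsto \widecheck{\cM}$ does not change the base space, so $\widecheck{\cM_i}$ sits over the finite base of $\cM_i$, while by the fibre isomorphism $\widecheck{(\cM_i)}_x \iso (M_i)_x^\vee$ of Theorem~\ref{thm:maintheorem2} its fibres are the Pontryagin duals of the finite modules $(M_i)_x$, hence finite of the same cardinality; thus $\widecheck{\cM_i}$ has finite total space.

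I do not anticipate a genuine obstacle. The only point demanding a little care is the bookkeeping that $(\widecheck{-})$ really converts the particular filtered inverse system $\{\cM_i\}$ of `sheaves' into a filtered direct system of \emph{finite} \'etale spaces with colimit $\cA$; this is just the preservation of (co)limits by equivalences, combined with the explicit description of inverse limits of `sheaves' in \cite{wilkes} and the elementary fact that Pontryagin duality sends finite modules to finite modules of the same size.
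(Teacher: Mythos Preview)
Your argument is correct and is precisely the approach the paper takes: the corollary is stated without proof, with the understanding that it is transported from \cite[Proposition~A.12]{wilkes} across the duality of Theorem~\ref{thm:maintheorem2}. Your write-up simply spells out the routine details (contravariant equivalences swap limits and colimits, and Pontryagin duality preserves finiteness of fibres and base), which the paper leaves implicit.
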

\begin{clly}[{\cite[Proposition A.8]{wilkes}}]
For a directed system of \'etale spaces $(\cA_i, p_i, X_i)$, the direct limit $(\varinjlim \cA_i, p, \varprojlim X_i)$ is an \'etale space of $R$-modules and 
\[\varinjlim \cprod \cA_i \iso \cprod \varinjlim \cA_i.\]
\end{clly}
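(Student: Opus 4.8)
The plan is to obtain both assertions by transporting the known behaviour of inverse limits of `sheaves' of profinite $R$-modules across the duality of Theorem~\ref{thm:maintheorem2}. First I would record that, since a morphism of \'etale spaces covers a map of base spaces in the opposite direction, a directed system $(\cA_i, p_i, X_i)$ with bonding morphisms $\cA_i \to \cA_j$ for $i \leq j$ lives over an \emph{inverse} system of profinite base spaces $(X_i)$, whose bonding maps $X_j \to X_i$ have limit the space $\varprojlim X_i$ appearing in the statement. Applying the contravariant equivalence $\widecheck{(-)}\colon \op{(\etale\text{-}R)} \to \PShf$ turns this into an inverse system of `sheaves' $(\widecheck{\cA_i})$ over the same inverse system $(X_i)$.

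Next I would invoke \cite[Proposition A.8]{wilkes}: the inverse limit $\varprojlim \widecheck{\cA_i}$ is again a `sheaf' of profinite $R$-modules over $\varprojlim X_i$, and the canonical map $\bigboxplus \varprojlim \widecheck{\cA_i} \to \varprojlim \bigboxplus \widecheck{\cA_i}$ is an isomorphism. Because $\widecheck{(-)}$ is an equivalence, it carries this limit in $\PShf$ back to the colimit of the $\cA_i$ in $\etale\text{-}R$; thus $\widecheck{\varprojlim \widecheck{\cA_i}}$ \emph{is} the direct limit of the system and, in particular, an \'etale space of $R$-modules. To see this abstract colimit agrees with the concrete object $(\varinjlim \cA_i, p, \varprojlim X_i)$ named in the statement I would compare fibres over a point $x = (x_i)$: the fibre formula of Theorem~\ref{thm:maintheorem2} gives $(\widecheck{\cA_i})_{x_i} \iso A_{i,x_i}^\vee$, so the fibre of $\varprojlim \widecheck{\cA_i}$ is $\varprojlim A_{i,x_i}^\vee$, whose Pontryagin dual $\varinjlim A_{i,x_i}$ is exactly the fibre of the direct limit. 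This establishes the first assertion.

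For the global-sections formula I would chain the duality isomorphisms. Dualising $\bigboxplus \widecheck{\cA} \iso (\cprod \cA)^\vee$ from Theorem~\ref{thm:maintheorem2} and cancelling the double dual of a discrete module yields $\cprod \cA \iso (\bigboxplus \widecheck{\cA})^\vee$. Combining this with $\widecheck{\varinjlim \cA_i} \iso \varprojlim \widecheck{\cA_i}$, the commutation of $\bigboxplus$ with $\varprojlim$ from \cite[Proposition A.8]{wilkes}, and the continuity of Pontryagin duality (which sends an inverse limit of profinite modules to the direct limit of the dual discrete modules), I obtain
\[\cprod \varinjlim \cA_i \iso \bigl(\bigboxplus \varprojlim \widecheck{\cA_i}\bigr)^\vee \iso \bigl(\varprojlim \bigboxplus \widecheck{\cA_i}\bigr)^\vee \iso \varinjlim \bigl(\bigboxplus \widecheck{\cA_i}\bigr)^\vee \iso \varinjlim \cprod \cA_i,\]
as required.

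The substantive point, rather than the formal chain above, is the identification of the abstractly-produced colimit with the concrete topological direct limit over $\varprojlim X_i$: a priori it is not obvious that the naive fibrewise direct-limit topology is a local homeomorphism onto $\varprojlim X_i$. Routing through the `sheaf' side sidesteps this, since there inverse limits of profinite spaces are manifestly profinite and the `sheaf' axioms are visibly preserved; the fibre computation then pins down the resulting \'etale space. The remaining care is purely the bookkeeping of the limit--colimit reversal forced by the contravariance of $\widecheck{(-)}$.
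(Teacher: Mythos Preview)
Your approach is exactly what the paper intends: the corollary is stated without proof, introduced only by the sentence ``We immediately inherit two important corollaries about direct limits of \'etale spaces from the category of `sheaves' of $R$-modules'' together with the citation to \cite[Proposition~A.8]{wilkes}. Your argument is the natural unpacking of that sentence---apply the contravariant equivalence $\widecheck{(-)}$, use the cited inverse-limit result on the `sheaf' side, and transport back---so there is nothing further to compare.
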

The equivalence of \'etale spaces and {\it bona fide} sheaves also makes it easy to combine them with functors on the category of discrete $R$-modules. 
\begin{theorem}\label{thm:functors}
Let $R$ and $S$ be profinite rings with unity and let $\cat{F}\colon \DMod \to \DMod[S]$ be an additive functor which commutes with direct limits. There is an induced functor $\cat{F}\colon \etale\text{-}R \to  \etale\text{-}S$ such that for every \'etale space $\cA$ of $R$-modules, the fibre $\cat{F}(\cA)_x$ is naturally isomorphic to $\cat{F}(A_x)$ and there are natural isomorphisms
\[\cprod \cat{F}(\cA) \iso \cat{F}(\cprod \cA).\]
\end{theorem}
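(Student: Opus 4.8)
The plan is to transport $\cat{F}$ through the equivalence $\etale\text{-}R\simeq\Shf\text{-}R$ of Section~3 and simply postcompose with $\cat{F}$ at the level of honest sheaves. Given a sheaf $(\cat{A},X)$, viewed as a functor $\op{\OX}\to\DMod$, set $\cat{F}_\ast\cat{A}=\cat{F}\circ\cat{A}\colon\op{\OX}\to\DMod[S]$. To see this is again a sheaf, use the simplified criterion of Proposition~\ref{prop:simplesheaves}: for a disjoint clopen decomposition $V=V_1\sqcup\cdots\sqcup V_n$ the map $\cat{A}(V)\to\prod_i\cat{A}(V_i)$ is an isomorphism, and since $\cat{F}$ is additive it preserves finite biproducts, so $\cat{F}(\cat{A}(V))\to\prod_i\cat{F}(\cat{A}(V_i))$ is an isomorphism as well. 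On a morphism of sheaves $(\Phi,f)$ one keeps $f$ and postcomposes the natural transformation $\Phi$ with $\cat{F}$; functoriality of $\cat{F}$ makes $\cat{F}_\ast$ a functor $\Shf\text{-}R\to\Shf\text{-}S$. I then define $\cat{F}\colon\etale\text{-}R\to\etale\text{-}S$ to be the composite $\etale\circ\cat{F}_\ast\circ\Shf$.

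For the statement on fibres, recall that by construction $\etale(\cat{B})_x=\varinjlim_{U\ni x}\cat{B}(U)$. Hence the fibre of $\cat{F}(\cA)$ over $x$ is $\varinjlim_{U\ni x}\cat{F}(\Shf(\cA)(U))$. The clopen neighbourhoods of $x$, ordered by reverse inclusion, form a directed system, so this is a direct limit; since $\cat{F}$ commutes with direct limits it equals $\cat{F}\bigl(\varinjlim_{U\ni x}\Shf(\cA)(U)\bigr)=\cat{F}\bigl(\etale(\Shf(\cA))_x\bigr)$, and the natural isomorphism $\etale(\Shf(\cA))\iso\cA$ of Section~3 identifies $\etale(\Shf(\cA))_x$ with $A_x$. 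The required naturality, and the compatibility of this isomorphism with the $S$-module structure on the fibre, are inherited from the naturality of that equivalence and the functoriality of $\cat{F}$.

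For the global sections statement I would use the observation, recorded after the étale-sheaf equivalence, that $\cprod$ is the composite $\eval_X\circ\Shf$. Thus
\[\cprod\cat{F}(\cA)=\Shf\bigl(\etale(\cat{F}_\ast\Shf(\cA))\bigr)(X),\]
and the natural isomorphism $\Shf\circ\etale\iso\cat{id}$ identifies this with $(\cat{F}_\ast\Shf(\cA))(X)=\cat{F}\bigl(\Shf(\cA)(X)\bigr)=\cat{F}(\cprod\cA)$, naturally in $\cA$.

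I do not expect a serious obstacle: the hypotheses are used in exactly two places — additivity, to see that postcomposition preserves the sheaf axiom, and commutation with direct limits, for the fibre computation — while everything else is formal bookkeeping with the equivalences already established. The only mildly delicate point is checking that the displayed isomorphisms are natural and respect the algebraic (not merely set-theoretic) structure, but this is automatic because the equivalences $\etale\text{-}R\simeq\Shf\text{-}R$ are equivalences of categories of module objects, so their structure maps are already morphisms of $R$-modules fibrewise and are preserved by the functor $\cat{F}$.
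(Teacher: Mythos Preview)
Your proposal is correct and follows essentially the same route as the paper: define $\cat{F}$ on \'etale spaces as $\etale\circ\cat{F}_\ast\circ\Shf$, invoke Proposition~\ref{prop:simplesheaves} together with additivity to verify the sheaf condition, use commutation with direct limits for the fibre computation, and read off the global-sections isomorphism from $\cprod=\eval_X\circ\Shf$. Your treatment of the global-sections step via the natural isomorphism $\Shf\circ\etale\Rightarrow\cat{id}$ is slightly more explicit than the paper's, which simply declares that isomorphism ``immediate'', but the content is the same.
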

\begin{proof}
Since \cat{F}\ is additive, and therefore commutes with finite products, by Proposition~\ref{prop:simplesheaves} we find that the presheaf
\[\cat{FA}(U) = \cat{F}(\cat{A}(U)) \]
defines a sheaf for any sheaf $\cat{A}$. For an \'etale space \cA\ of $R$-modules we may thus define 
\[\cat{F}(\cA) = \etale(\cat{F}\Shf(\cA)).\]
The natural isomorphism 
\[\cprod \cat{F}(\cA) \iso \cat{F}(\cprod \cA)\]
is immediate. Because $\cat{F}$ commutes with direct limits we also find
\[\cat{F}(\cA)_x = \varinjlim_{U\ni x} \cat{F}\Shf(\cA)(U) = \cat{F}(\varinjlim \Shf(\cA)(U)) \iso \cat{F}(A_x)\]
as required.
\end{proof}
A particularly important family of functors satisfying the conditions of the above theorem are Ext-functors. Note that the image of such an Ext-functor is a discrete torsion abelian group, which may be regarded as an object of \DMod[\widehat{\Z}]. 
\begin{clly}
Let $Z$ be a profinite (right) $R$-module and let $\cA$ be an \'etale space of $R$-modules. Then there are natural isomorphisms of cohomological $\delta$-functors 
\[\Ext^\bullet_R(Z, \cprod \cA) \iso \cprod_{x\in X} \Ext^\bullet_R(Z,A_x).\]
\end{clly}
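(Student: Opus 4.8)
The plan is to deduce the statement from Theorem~\ref{thm:functors}, applied one cohomological degree at a time with $S=\widehat{\Z}$ and $\cat{F}=\Ext^n_R(Z,-)$, and then to check that the resulting degreewise isomorphisms respect the connecting maps, so that together they constitute an isomorphism of $\delta$-functors. Two things need verification: that each $\Ext^n_R(Z,-)$ satisfies the hypotheses of Theorem~\ref{thm:functors}, and that the $\delta$-structures agree.

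For the first I would compute $\Ext$ from a resolution $\cdots\to R[\![E_1]\!]\to R[\![E_0]\!]\to Z\to 0$ of $Z$ by free profinite $R$-modules on profinite spaces $E_i$, which exists since $\PMod$ has enough projectives. Then $\Hom_R(R[\![E_i]\!],M)\iso C(E_i,M)$, the module of continuous maps $E_i\to M$; as $E_i$ is profinite and $M$ discrete these are the locally constant maps, each with finite image, so $C(E_i,M)$ is a discrete torsion group, and any such map into $\varinjlim_j M_j$ factors through some $M_j$, giving $C(E_i,\varinjlim_j M_j)=\varinjlim_j C(E_i,M_j)$. Since filtered colimits are exact and $C(E_i,-)$ is visibly exact on short exact sequences of discrete modules, the complex $C(E_\bullet,-)$ computes $\Ext^\bullet_R(Z,-)$ together with its $\delta$-functor structure, and each $\Ext^n_R(Z,-)=H^n(C(E_\bullet,-))$ is an additive functor $\DMod\to\DMod[\widehat{\Z}]$ commuting with direct limits. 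Theorem~\ref{thm:functors} then supplies, for each $n$, an \'etale space over $X$ with fibre $\Ext^n_R(Z,A_x)$ over $x$, and a natural isomorphism $\cprod_{x\in X}\Ext^n_R(Z,A_x)\iso\Ext^n_R(Z,\cprod\cA)$; in particular this legitimises the right-hand side as a continuous product of \'etale spaces.

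For the second point, consider a short exact sequence $0\to Z_1\to Z_2\to Z_3\to 0$ of profinite $R$-modules (a short exact sequence of \'etale spaces is handled the same way). The connecting natural transformations $\Ext^n_R(Z_1,-)\to\Ext^{n+1}_R(Z_3,-)$ induce, by functoriality of the construction of Theorem~\ref{thm:functors} in the coefficient functor, morphisms of \'etale spaces, so the fibrewise long exact $\Ext$-sequences assemble into a long exact sequence of \'etale spaces. Applying $\cprod$, which is exact since it is Pontryagin dual to the profinite direct sum $\bigboxplus$ (Theorem~\ref{thm:maintheorem2}) and the latter is exact \cite[Ch.~9]{ribes}, produces the long exact sequence of the $\cprod_{x}\Ext^\bullet_R(Z,A_x)$; naturality of the isomorphism of Theorem~\ref{thm:functors} identifies it with the long exact sequence of $\Ext^\bullet_R(-,\cprod\cA)$. (Alternatively, both $\delta$-functors are coeffaceable in $Z$ — a projective profinite cover $P\twoheadrightarrow Z$ makes $\Ext^n_R(P,-)$, hence also $\cprod\Ext^n_R(P,\cA)$, vanish for $n\geq 1$ — so both are universal and the degree-zero isomorphism extends uniquely.)

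The main obstacle is the first step: making sure that passing to derived functors preserves the two hypotheses of Theorem~\ref{thm:functors}, namely commuting with direct limits and landing in discrete torsion groups. This is exactly why it is convenient to compute $\Ext$ via a free profinite resolution of the first variable rather than an injective resolution of the second; the $\delta$-functor bookkeeping is then straightforward given exactness of $\cprod$.
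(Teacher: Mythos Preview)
Your approach is correct and is exactly the one the paper intends: the corollary is stated immediately after Theorem~\ref{thm:functors} with no proof, the paper merely remarking that the $\Ext$-functors satisfy the hypotheses and land in $\DMod[\widehat{\Z}]$. You have supplied the verifications the paper omits---that $\Ext^n_R(Z,-)$ is additive, commutes with filtered colimits, and takes values in discrete torsion groups (via a free profinite resolution of $Z$)---and you have also addressed the compatibility with connecting maps, which the paper does not discuss at all. So there is no divergence in method, only in level of detail.
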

Having established duality of `sheaves' of profinite modules and \'etale spaces of $R$-modules, many useful results concerning the former may be reformulated in the dual category. To conclude the paper, let us give one key example which is likely to prove useful. From now on we let $R= \Zpiof{G}$ be the completed group ring of a profinite group $G$ for some set of primes $\pi$. Recalling that the categories of right and left $R$-modules are canonically isomorphic for such a ring $R$, we will drop the distinction and regard all modules as left modules. An equivalent name for an $R$-module for this ring $R$ would be ``discrete $\pi$-primary $G$-module''.

Let the profinite group $G$ act on the profinite set $Y$ and let $q\colon Y \to G\lqt Y$ be the quotient map. There is an isomorphism \cite[Proposition~A.14]{wilkes} 
\[\Zpiof{Y}\iso \bigboxplus_{\underline y \in G \lqt Y} \Zpiof{q^{-1}(\underline y)}.\]
Since the functor $\Ext_R^\bullet((-)^\vee, A)$ satisfies the conditions of Theorem~\ref{thm:functors} we have a natural isomorphism 
\[\Ext_R^\bullet(\Zpiof{Y}, A) \iso \cprod_{\underline y \in G\lqt Y} \Ext_R^\bullet(\Zpiof{q^{-1}(\underline y)}, A)\]
for each discrete torsion $ \Zpiof{G}$-module $A$.
When $q$ admits a continuous section $\sigma \colon G \lqt Y \to Y$, so that the family of stabilizers 
\[G_{\underline y} = \stab(\sigma(\underline y))\]
is continuously indexed by $G\lqt Y$ (see \cite[Section 5.2]{ribes}), this isomorphism takes the form 
\[\Ext_R^\bullet(\Zpiof{Y}, A) \iso \cprod_{\underline y \in G\lqt Y} \Ext_R^\bullet(\Zpiof{G/G_{\underline y}}, A)\]
and therefore, by Shapiro's lemma (see \cite[Proposition~1.23]{wilkes}) we have a natural isomorphism 
\[\Ext_R^\bullet(\Zpiof{Y}, A) \iso \cprod_{\underline y \in G\lqt Y} H^\bullet(G_{\underline y}, A).\]

Now suppose $G$ acts on a pro-$\pi$ tree $T$, so that $G$ has actions on the profinite vertex and edge spaces $V(T)$ and $E^*(T) = T/V(T)$ respectively. By definition of a pro-$\pi$ tree \cite[Section 2.4]{ribes} there is a short exact sequence of $\Zpiof{G}$-modules
\[\begin{tikzcd}
0 \ar{r}& \Zpiof{E^*(T), \ast} \ar{r}& \Zpiof{V(T)}\ar{r}& \Z[\pi] \ar{r}& 0.
\end{tikzcd}\]
Applying the cohomological Ext-functors yields a long exact sequence
\[\begin{tikzcd}[row sep = tiny]
\to H^n(G, A) \ar{r}& \displaystyle\cprod_{\underline v\in G\lqt V(T)} \Ext_R^n(\Zpiof{G/G_{\underline v}}, A) 
\ar[phantom]{d}[coordinate, name=Z]{""}
\ar[rounded corners,to path={ -- ([xshift=2ex]\tikztostart.east)
|- (Z) [near end]\tikztonodes
-| ([xshift=-2ex]\tikztotarget.west)
-- (\tikztotarget)} ]{d} 
 &\\
& \displaystyle\cprod_{\underline e \in G\lqt E^*(T)} \Ext_R^n(\Zpiof{G/G_{\underline e}}, A) \ar{r} &  H^{n+1}(G,A)  \to 
\end{tikzcd}\]
for every discrete $\pi$-primary $G$-module $A$. When the quotient map admits a section, we obtain the desired cohomological Mayer-Vietoris sequence dual to \cite[Theorem 9.4.1]{ribes}.
\begin{theorem}
Let a profinite group $G$ act on a pro-$\pi$ tree $T$ and suppose the quotient map $T \to G\lqt T$ admits a section $\sigma$. Then there is a natural long exact sequence
\[\begin{tikzcd}[row sep = tiny]
\cdots \to H^n(G, A) \ar{r}& \displaystyle\cprod_{\underline v\in G\lqt V(T)} H^n(G_{\sigma(\underline v)}, A) 
\ar[phantom]{d}[coordinate, name=Z]{""}
\ar[rounded corners,to path={ -- ([xshift=2ex]\tikztostart.east)
|- (Z) [near end]\tikztonodes
-| ([xshift=-2ex]\tikztotarget.west)
-- (\tikztotarget)} ]{d} 
 &\\
& \displaystyle\cprod_{\underline e \in G\lqt E^*(T)} H^n(G_{\sigma(\underline e)}, A) \ar{r} &  H^{n+1}(G,A)  \to \cdots
\end{tikzcd}\]
for any discrete $\pi$-primary $G$-module $A$. 
\end{theorem}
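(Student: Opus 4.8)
The plan is to deduce this theorem from the long exact sequence displayed immediately above it by substituting the Shapiro-type identification of the fibre Ext-groups with group cohomology. Concretely, I would first recall that the preceding discussion already produced, for any discrete $\pi$-primary $G$-module $A$, the long exact Ext-sequence obtained by applying $\Ext_R^\bullet(-,A)$ to the defining short exact sequence $0 \to \Zpiof{E^*(T),\ast} \to \Zpiof{V(T)} \to \Z[\pi] \to 0$ of a pro-$\pi$ tree, together with the natural isomorphisms
\[\Ext_R^\bullet(\Zpiof{V(T)}, A) \iso \cprod_{\underline v\in G\lqt V(T)} \Ext_R^\bullet(\Zpiof{G/G_{\underline v}}, A), \quad \Ext_R^\bullet(\Zpiof{E^*(T),\ast}, A) \iso \cprod_{\underline e\in G\lqt E^*(T)} \Ext_R^\bullet(\Zpiof{G/G_{\underline e}}, A),\]
which come from Theorem~\ref{thm:functors} applied to the functor $\cat{F} = \Ext_R^\bullet((-)^\vee, A)$ and the profinite-direct-sum decompositions of $\Zpiof{V(T)}$ and $\Zpiof{E^*(T),\ast}$ coming from \cite[Proposition A.14]{wilkes}. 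Here the hypothesis that the quotient map $T \to G\lqt T$ admits a section $\sigma$ is exactly what is needed to write these decompositions with the stabiliser subgroups $G_{\sigma(\underline v)}$, $G_{\sigma(\underline e)}$ as a continuously indexed family, as in \cite[Section 5.2]{ribes}.

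Second, I would invoke Shapiro's lemma in the form \cite[Proposition 1.23]{wilkes}, giving natural isomorphisms $\Ext_R^\bullet(\Zpiof{G/G_{\underline v}}, A) \iso H^\bullet(G_{\sigma(\underline v)}, A)$ and likewise for edges, compatible with the continuous indexing so that they assemble into isomorphisms of the corresponding \'etale spaces (equivalently, $\cprod$'s). This is where I would be slightly careful: Shapiro's lemma must be applied not just fibrewise but as a natural isomorphism of the functors $\cat{F}$ involved, so that the resulting identification commutes with the restriction maps of the sheaf/\'etale space and hence induces an isomorphism on $\cprod$. Since the isomorphisms in question are the standard ones and $\cprod$ is a functor on \'etale spaces (which, by the equivalence of categories, is determined by its behaviour on fibres together with the restriction maps), this compatibility is routine but should be stated.

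Third, I would substitute these identifications into the long exact Ext-sequence and simply rewrite the connecting maps. The map $H^n(G,A) = \Ext_R^n(\Z[\pi], A) \to \cprod_{\underline v} H^n(G_{\sigma(\underline v)}, A)$ and the map $\cprod_{\underline v} H^n(G_{\sigma(\underline v)}, A) \to \cprod_{\underline e} H^n(G_{\sigma(\underline e)}, A)$ are the ones induced by the short exact sequence of the tree together with the $\cprod$-decompositions; naturality in $A$ is inherited from naturality of every ingredient (the Ext long exact sequence, Theorem~\ref{thm:functors}, Shapiro's lemma, and the decomposition of \cite[Proposition A.14]{wilkes}). Finally I would remark that this is the cohomological dual of the Mayer--Vietoris homology sequence \cite[Theorem 9.4.1]{ribes}: applying Pontryagin duality and Theorem~\ref{thm:maintheorem2}, which exchanges $\bigboxplus$ of the homological decomposition with $\cprod$ of the present one, recovers precisely the known homological statement, so no further verification of exactness is needed.

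The main obstacle is not any single hard estimate but rather bookkeeping: ensuring that all the isomorphisms (the $\cprod$-decompositions of $\Zpiof{V(T)}$ and $\Zpiof{E^*(T),\ast}$, the application of $\Ext_R^\bullet(-,A)$, and Shapiro's lemma) are natural \emph{as morphisms of \'etale spaces} and not merely fibrewise, so that passing to $\cprod$ is legitimate and the resulting long exact sequence is genuinely natural in $A$. Once the framework of Theorem~\ref{thm:functors} and Theorem~\ref{thm:maintheorem2} is in place this is essentially formal, which is why the proof will be short.
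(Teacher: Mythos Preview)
Your proposal is correct and follows exactly the same route as the paper: the theorem is stated as the immediate consequence of the preceding long exact Ext-sequence once one plugs in Shapiro's lemma, with the section hypothesis used precisely to make the stabilisers $G_{\sigma(\underline v)}$, $G_{\sigma(\underline e)}$ a continuously indexed family. Your care about naturality of Shapiro's lemma at the level of \'etale spaces is appropriate bookkeeping, though the paper does not spell this out explicitly either.
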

\bibliographystyle{alpha}
\bibliography{Prof_Sheaf.bib}
\end{document}